\newcommand{\stkout}[1]{\ifmmode\text{\sout{\ensuremath{#1}}}\else\sout{#1}\fi}
\tikzstyle{vertex}=[circle,draw=black,fill=black,inner sep=0,minimum size=5pt,text=white,font=\footnotesize]
\renewenvironment{proof}[1][\proofname] {\par\pushQED{\qed}\normalfont\topsep6\p@\@plus6\p@\relax\trivlist\item[\hskip\labelsep\bfseries#1\@addpunct{.}]\ignorespaces}{\popQED\endtrivlist\@endpefalse}
\newtheorem{theorem}{\bf Theorem}[section]
\newtheorem{lemma}[theorem]{\bf Lemma}
\newtheorem{corollary}[theorem]{\bf Corollary}
\newtheorem{conjecture}[theorem]{\bf Conjecture}
\newtheorem{observation}[theorem]{\bf Observation}
\theoremstyle{definition}
\newtheorem{definition}[theorem]{\bf Definition}
\newcommand\claimproofend{\renewcommand{\qedsymbol}{$\boxdot$}
\end{proof}
\renewcommand{\qedsymbol}{$\square$}}
\title{Proof of the KAMAK tree conjecture\thanks{Department of Mathematics, ETH Z\"{u}rich, Switzerland.\\
 E-mail: {\tt $\{$micha.christoph,raphaelmario.steiner$\}$@math.ethz.ch}.
 This research was supported by the SNSF Ambizione Grant No. 216071.
}}
\author{Micha Christoph\and Raphael Steiner
}
\date{}
\begin{document}

\maketitle
\begin{abstract}
There are many intriguing questions in extremal graph theory that are well-understood in the undirected setting and yet remain elusive for digraphs. A natural instance of such a problem was recently studied by Hons, Klimo\v{s}ov\'{a}, Kucheriya, Mik\v{s}an\'{i}k, Tkadlec and Tyomkyn: What are the digraphs that have to appear as a subgraph in all digraphs of sufficiently large minimum out-degree? Hons et al.~showed that all such digraphs must be oriented forests with a specific structure, and conjectured that vice-versa all oriented forests with this specific structure appear in any digraph of sufficiently large minimum out-degree. 
In this paper, we confirm their conjecture.
\end{abstract}
\section{Introduction}
There is a large body of open problems in extremal combinatorics concerning minimum degree conditions on a graph or a digraph which enforce the existence of a specific substructure, such as a certain subgraph, a minor or a subdivision. An intriguing phenomenon which occurs for many if not most of these problems is that there seems to be a stark difference between the difficulty of undirected and directed instances, with the latter often remaining wide open or even being false. In the following, let us only discuss three selected examples which illustrate this point:

\begin{itemize}
    \item The famous and still widely open \emph{Caccetta-H\"{a}ggkvist-conjecture}~\cite{Caccetta78} from 1978 posits that the minimum out-degree threshold enforcing the existence of a directed cycle of length at most $g$ in an oriented graph with $n$ vertices equals $\left\lceil\frac{n}{g}\right\rceil$. The most popular special case of this problem that still remains open is when $g=3$, which corresponds to the minimum out-degree threshold forcing a directed triangle in oriented graphs.  The best known upper bound is of the form $(1+\varepsilon)\frac{n}{3}$ for some $\varepsilon>0$~\cite{Hladky17}, see also~\cite{Sullivan06} for a survey. In contrast, the minimum degree threshold enforcing a triangle in $n$-vertex undirected graphs is well-known to be precisely $\lfloor \frac{n}{2}\rfloor+1$ by Mantel's theorem.
    \item A classic result in extremal graph theory, due to Bollob\'{a}s-Thomason~\cite{Bollobas98} and independently Koml\'{o}s-Szemer\'{e}di~\cite{Komlos94,Komlos96} states that the minimum degree threshold enforcing a \emph{subdivision} of a clique on $t$ vertices is of order $\Theta(t^2)$. In contrast, the corresponding result turns out to be wrong for digraphs, even qualitatively. Namely, it is known that there exist digraphs of arbitrarily large minimum out-degree which do not contain an even directed cycle~\cite{Thomassen85}, and it is easy to observe that such digraphs cannot contain a subdivision of a complete digraph of order $t$ for any $t\ge 3$. Motivated by this discrepancy, Mader~\cite{Mader85} conjectured in 1985 that for every $t\in \mathbb{N}$ there exists a constant $d(t)\in \mathbb{N}$ such that every digraph of minimum out-degree at least $d(t)$ contains a subdivision of the \emph{transitively oriented} tournament of order $t$. Mader's conjecture remains widely open and has only been proved for $t\le 4$~\cite{Mader96}. Note that Mader's conjecture can be restated as saying that for every acyclic digraph $F$ there exists a constant $d(F)\in \mathbb{N}$ such that all digraphs of minimum out-degree at least $d(F)$ contain a subdivision of $F$. This statement has so far only been verified for precious few classes of digraphs $F$, such as oriented cycles~\cite{Gishboliner22} and special types of oriented trees~\cite{Aboulker19}. It remains open whether it holds for all oriented trees.
    \item A fundamental question raised by Stiebitz~\cite{Stiebitz95} in 1995 and independently by Alon~\cite{Alon96,Alon06} in 1996 and again in a survey article in 2006 asks whether for every pair $s,t$ of positive integers there exists some $d(s,t)\in \mathbb{N}$ such that every digraph $D$ with minimum out-degree $d(s,t)$ allows for a partition $A\cup B$ of its vertex-set such that $\delta^+(D[A])\geq s$ and $\delta^+(D[B])\geq t$. This question remains widely open for all pairs $(s,t)$ except $(s,t)=(1,1)$. For the latter case an affirmative answer follows from the fact that digraphs of sufficiently large minimum out-degree contain many disjoint directed cycles~\cite{Thomassen83-2, Alon96, Bucic18}, which is related to another famous conjecture in extremal digraph theory, namely the so-called \emph{Bermond-Thomassen conjecture}~\cite{BermondThomassen}. Petrova and the authors~\cite{Christoph25} recently proved that the question of Stiebitz and Alon can be reduced to the case $(s,t)=(2,2)$ in a strong sense.
    
    Also here, the analogous question for undirected graphs turns out to be much easier and was settled affirmatively by Thomassen~\cite{Thomassen83} back in 1983. The optimal bound on the necessary minimum degree in this case was later determined to be $s+t+1$ by Stiebitz~\cite{Stiebitz96}. 
\end{itemize}

In this paper, we shall be concerned with another highly natural problem of this type which was recently introduced and studied by Hons, Klimo\v{s}ov\'{a}, Kucheriya, Mik\v{s}an\'{i}k, Tkadlec and Tyomkyn~\cite{hons25}. Following their terminlogy, let us say that a (di)graph $F$ is \emph{$\delta$-enforcible}/\emph{$\delta^+$-enforcible} if there exists a constant $d(F)\in \mathbb{N}$ (depending only on $F$) such that every (di)graph with minimum (out-)degree at least $d(F)$ contains a sub(di)graph isomorphic to $F$. It is easy to observe that an undirected graph $F$ is $\delta$-enforcible if and only if it is a forest. To see this, note that a greedy embedding strategy proves that every graph with minimum degree at least $k-1$ contains every forest on at most $k$ vertices as a subgraph. In the other direction, note that the existence of graphs with arbitrarily large minimum degree and girth implies that no graph containing a cycle is $\delta$-enforcible.

One may naively expect that the situation for digraphs is analogous and that the $\delta^+$-enforcible digraphs simply consist of all oriented forests. However, as proved by Hons et al.~\cite{hons25} this is not the case: All $\delta^+$-enforcible digraphs are oriented forests with a restricted structure, namely, all their vertices of in-degree at least two must be of the same ``height''. To state their result more precisely, for a digraph $D$, let us define a \emph{height function} of $D$ as any mapping $h:V(D)\rightarrow \mathbb{Z}$ such that $h(v) = h(u)+1$ for every arc $(u,v)\in A(D)$. Note that every oriented forest admits a height function and that the latter is unique up to uniform shifts within connected components. Let us say that an oriented forest/tree is a \emph{grounded forest/tree} if it admits a height function that is constant on the set of vertices of in-degree at least $2$. See Figure~\ref{fig:grounded} for an example of such a tree. 
\begin{figure}[htb]
    \centering
\includegraphics[width=0.5\linewidth]{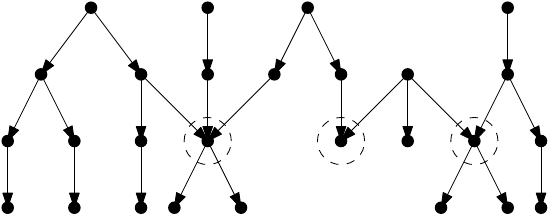}
    \caption{A grounded tree. The height function is reflected by the heights of vertices in the picture (with height increasing from top to bottom) and the vertices of in-degree at least two are circled.}
    \label{fig:grounded}
\end{figure}

We are now ready to state the aforementioned result of Hons et al.
\begin{theorem}[cf.~\cite{hons25}, Theorem~1.3]\label{thm:hons}
Every $\delta^+$-enforcible digraph is a grounded forest.
\end{theorem}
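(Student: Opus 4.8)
\medskip
\noindent\textbf{Proof proposal.}
The plan is to prove the contrapositive: if $F$ is not a grounded forest, then $F$ is not $\delta^+$-enforcible, i.e.\ for every $d\in\mathbb{N}$ I will exhibit a digraph of minimum out-degree at least $d$ not containing $F$ as a subdigraph. I would split into two cases according to the way in which $F$ fails to be a grounded forest: either $F$ is not an oriented forest, or $F$ is an oriented forest that is not grounded; each case is settled by an explicit construction.

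\emph{Case 1: $F$ is not an oriented forest.} Here I would use the standard fact that for every $d$ and every $g$ there exists a $2d$-regular simple graph $G$ of girth greater than $g$. Taking an Eulerian orientation of $G$ yields a digraph $D$ in which every vertex has out-degree exactly $d$ and whose underlying multigraph is $G$ itself, hence simple and of girth $>g$. Applying this with $g=|V(F)|$, every subdigraph of $D$ on at most $|V(F)|$ vertices has an underlying graph which is a forest. But the underlying multigraph of $F$ contains a cycle on at most $|V(F)|$ vertices — either a digon, viewed as a cycle of length $2$, or an ordinary undirected cycle — so $F\not\subseteq D$. A single construction thus rules out both digons and longer undirected cycles in $F$.

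\emph{Case 2: $F$ is an oriented forest that is not grounded.} First I would reduce to a tree: a forest is grounded iff each of its components is, and $F$ is $\delta^+$-enforcible iff each of its components is (given a digraph of huge minimum out-degree, embed the components one by one, deleting each image before embedding the next, which drops the minimum out-degree by only a bounded amount). So assume $F=T$ is a tree, fix a height function $h$ of $T$, and let $W$ be its set of vertices of in-degree at least $2$; non-groundedness means $W$ is not contained in a single level of $h$, so $W\neq\emptyset$. Given $d$, pick $L>|V(T)|$ and build the digraph $D_{d,L}$ as follows: take a disjoint union of $d$ complete $d$-ary out-arborescences of depth $L-1$, let $V_j$ be the set of vertices at depth $j$ (so $|V_0|=d$ and $|V_{L-1}|=d^{L}$), and add an arc from each leaf in $V_{L-1}$ to each of the $d$ roots in $V_0$. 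Then $D_{d,L}$ is $d$-out-regular, every arc runs from some $V_i$ to $V_{i+1\bmod L}$, and the vertices of in-degree at least $2$ in $D_{d,L}$ are precisely those of $V_0$. Now suppose $\varphi\colon T\hookrightarrow D_{d,L}$ were an embedding, and write $\ell(x)\in\mathbb{Z}_L$ for the level of $x\in V(D_{d,L})$. Since every arc of $D_{d,L}$ raises the level by $1$ modulo $L$ and $T$ is connected, $\ell\circ\varphi$ agrees with $h+c$ modulo $L$ for some constant $c$; choosing $h$ with values in $\{0,\dots,|V(T)|-1\}\subseteq\{0,\dots,L-1\}$, any two vertices of $T$ with equal $\ell\circ\varphi$-value have equal $h$-value. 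But each $w\in W$ has at least $2$ distinct in-neighbours in $T$, whose images under $\varphi$ are at least $2$ distinct in-neighbours of $\varphi(w)$, so $\varphi(w)$ has in-degree at least $2$ in $D_{d,L}$, forcing $\varphi(w)\in V_0$ and $(\ell\circ\varphi)(w)=0$. Hence all of $W$ lies in a single level of $h$, contradicting non-groundedness. As $d$ was arbitrary, $T$, and therefore $F$, is not $\delta^+$-enforcible.

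I expect the crux to be Case 2: recognizing that what one needs is a digraph of large minimum out-degree whose in-degree-$\ge 2$ vertices all lie on one ``layer'', which forces the cyclic layered shape above (a finite layered digraph with no sinks must wrap around) together with the care needed to avoid parallel arcs when the leaves feed back into $V_0$; and then arguing that an embedding of $T$ cannot ``wrap around'' the cycle of layers, which is exactly where the hypothesis $L>|V(T)|$ enters. Case 1 should be routine once one invokes the existence of high-girth regular graphs, the only pitfall being to remember that a digon must be counted as a length-$2$ cycle of the underlying multigraph.
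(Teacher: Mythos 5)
This statement is not proved in the paper at hand: it is quoted from Hons et al.~\cite{hons25} (their Theorem~1.3), so there is no in-paper proof to compare your argument against; I can only assess it on its own terms, and it checks out as a correct, self-contained proof of the contrapositive. In Case 1, orienting a $2d$-regular simple graph of girth greater than $|V(F)|$ along Euler tours of its components gives a $d$-out-regular digraph with no digons (an orientation of a simple graph has at most one arc per pair) and no short underlying cycles, which indeed excludes every $F$ whose underlying multigraph contains a cycle. In Case 2, the reduction to a single non-grounded component only requires the easy direction (a digraph containing $F$ contains each component, and a forest is grounded iff every component is, since height functions shift independently on components); your layered digraph $D_{d,L}$ is $d$-out-regular, its vertices of in-degree at least $2$ are exactly $V_0$, every arc advances the level by $1$ modulo $L$, and with $h$ normalized into $\{0,\dots,|V(T)|-1\}$ and $L>|V(T)|$ the congruence $\ell\circ\varphi\equiv h+c \pmod L$ forces all vertices of $W$ to share one $h$-value, contradicting non-groundedness; the possible digons created when $L=2$ are irrelevant and in any case excluded since $W\neq\emptyset$ forces $|V(T)|\ge 3$. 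This cyclically layered witness is the natural construction for the task and is presumably close in spirit to the one used in~\cite{hons25}, but since that proof is not reproduced in this paper, the comparison ends there.
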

One of the main open problems posed by Hons et al.~\cite{hons25} is the conjecture that the necessary condition above for a digraph to be $\delta^+$-enforcible is also sufficient.
\begin{conjecture}[KAMAK tree conjecture, cf.~Conjecture~1.4 in~\cite{hons25}]\label{con:kamak}
Every grounded forest is $\delta^+$-enforcible.
\end{conjecture}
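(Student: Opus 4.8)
The plan is to reduce \cref{con:kamak} to connected grounded trees, extract a clean structural description of such trees, and then embed them greedily into a large--out-degree digraph; the only genuinely non-trivial point is locating inside the host the ``confluence points'' at which the in-degree-$\ge 2$ vertices must be placed.

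\emph{Reduction, and structure of grounded trees.} If $F$ has components $F_1,\dots,F_p$, then $F$ is $\delta^+$-enforcible with $d(F)=\max_i\bigl(d(F_i)+\sum_{j<i}|F_j|\bigr)$, since deleting $\le|F_j|$ vertices drops the minimum out-degree by $\le|F_j|$; so we may assume $F$ is a tree. If $F$ has no vertex of in-degree $\ge2$ it is an out-arborescence, for which a standard greedy embedding gives $d(F)=|F|$; so assume $F$ has a non-empty set $R$ of in-degree-$\ge2$ vertices, and fix a height function $h$ that is $0$ on $R$. One checks the following shape. The set $R$ is an independent set of ``core'' vertices at height $0$. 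The components of $F-R$ using only vertices of height $\le-1$ are out-arborescences (the ``up-parts''), and their height-$(-1)$ vertices send arcs into $R$; since $F$ is a tree, the incidence structure between up-parts and core vertices is itself a tree $\mathcal{T}$, in which every core vertex has degree $\ge2$ and every leaf is an up-part. The components of $F-R$ using only vertices of height $\ge1$ are out-arborescences hanging below a single core vertex (the ``down-parts''). Finally, by enlarging $F$ \emph{within the class of grounded trees} — replacing each up-part and down-part by a complete $t$-ary out-arborescence of large depth (with designated leaves of the up-parts mapped to designated core vertices), and enlarging $\mathcal{T}$ to a fixed ``universal'' bipartite tree with all core degrees $\ge2$ — it suffices to embed one normalized grounded tree $\widehat F$ whose parameters depend only on $|F|$.

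\emph{The embedding scheme.} Let $D$ have minimum out-degree $\ge d$ with $d$ huge compared to $|F|$. First choose the images $p_1,\dots,p_{|R|}$ of the core vertices; the down-parts are then grown greedily below them (every vertex has out-degree $\ge d$ and everything embedded has size $\le d$, so there is always room and all branches stay disjoint). It remains to embed the up-parts so that each designated arc $\ell\to w$ becomes an arc into the chosen image of $w$. Since the ``padding'' of a normalized up-part is irrelevant, this reduces to the following: given the attachment tree $\mathcal{T}$ with the core placed at $p_1,\dots,p_{|R|}$, realize each edge of $\mathcal{T}$ incident to an up-part $C_i$ by a long directed path from a common hub vertex (the image of $C_i$'s root) to an in-neighbour of the corresponding $p_j$, with all these paths internally disjoint and avoiding the core and the down-parts. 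In other words, we must find in $D$ a copy of $\mathcal{T}$ in which the core nodes appear as sinks fed by long, internally disjoint directed paths.

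\emph{The main obstacle.} Everything above is bookkeeping; the heart of the proof is the existence of this confluence structure. Its prototype is a lemma of the form: for all $k,\ell$ there is a $d$ so that every digraph with minimum out-degree $\ge d$ contains a vertex together with $k$ internally disjoint directed paths of length $\ge\ell$ ending at it. The case $\ell=1$ is just $\delta^+\le$ average in-degree, but even $\ell=2$ is not obvious, because a priori \emph{nothing} forces long directed paths \emph{into} a prescribed vertex — in sharp contrast with the out-degree side, where long out-branchings are immediate; indeed one sees the difficulty concentrate here already at the level of leaves, since a sink-leaf of a grounded tree is trivially re-attached by a greedy step, so the obstructive trees are exactly the ``in-arborescence-like'' ones whose leaves are all sources. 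I expect the confluence structure to be produced by a careful extremal/inductive argument: using that a digraph with minimum out-degree $\ge d$ has $\ge d/2$ vertices of in-degree $\ge d/2$, one builds the required in-branchings level by level, repeatedly passing to a cleaner subdigraph from which sources have been removed while keeping the relevant degrees large, and then realizes all of $\mathcal{T}$ inside it; most of the technical work is in keeping the up-parts, the core and the down-parts pairwise disjoint throughout. Note that \cref{thm:hons} shows this is essentially best possible — the confluences are forced to lie at a single height, which is precisely why it is exactly the grounded forests that are $\delta^+$-enforcible.
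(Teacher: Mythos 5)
There is a genuine gap: your proposal reduces the conjecture to exactly the hard part and then stops. The reduction to trees, the greedy treatment of the ``down'' portion, and the observation that sink-leaves can be re-attached greedily (this is the paper's Observation~\ref{obs:obvious}, reducing to max-grounded trees) are fine, modulo minor bookkeeping (a component of $F-R$ may contain height-$0$ vertices of in-degree one and hence mix negative and positive heights, so your up-part/down-part dichotomy is not exhaustive as stated). But the ``confluence structure'' --- a prescribed tree pattern $\mathcal{T}$ of hubs and core vertices in which each core receives several internally disjoint directed paths of \emph{exactly} prescribed lengths emanating from the hubs, all disjoint from the rest of the embedding --- is the entire content of the theorem, and for it you offer only the expectation that a ``careful extremal/inductive argument,'' building in-branchings level by level from the many vertices of in-degree at least $d/2$, will succeed. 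That is not a proof; the prototype lemma you state (one vertex with $k$ internally disjoint in-paths of length at least $\ell$) is itself unproved and is strictly weaker than what your scheme requires: you need exact path lengths (so cores sit at height $0$), you need the cores to be hit simultaneously from prescribed hubs according to $\mathcal{T}$ (so the quantifier order ``first place the cores, then find the paths'' cannot be taken for granted --- vertices of large in-degree in the host need not be positioned compatibly with the hubs' out-reachability), and you need disjointness from the already embedded material.

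For comparison, the paper resolves precisely this obstacle with machinery that has no counterpart in your sketch: it works throughout with $(k,d)$-broom digraphs, which package out-reachability in a form stable under cleaning; Lemma~\ref{lem: lovasz trick} uses the Lov\'asz Local Lemma to sparsify a broom digraph so that the surviving root set still carries a broom structure of large degree \emph{and} consists of vertices of large in-degree in the original digraph (this is what lets an embedded core absorb an extra in-arc, Case~2 of Lemma~\ref{lem:keyfortheorem}); Lemma~\ref{lem: get typed} makes the digraph $k$-typed, i.e.\ the positions at which directed walks meet the root set are determined by their start vertex, which supplies the exact-length control (Case~3); and the embedding is carried out one leaf at a time by induction under the ``proper copy'' invariant (disjoint root-paths $P_D(x)$ for the sources), rather than by fixing all core images first. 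Without an argument at this level --- in particular, some mechanism coupling high in-degree to controlled out-reachability and exact distances --- your plan does not go through.
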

As the main result of this paper, we prove this conjecture. It is worth noting that Conjecture~\ref{con:kamak} reduces to the case when $F$ is connected (i.e. an oriented tree). Indeed, suppose an oriented forest $F$ has connected components $F_1,\ldots,F_k$ and $d_1,\ldots,d_k\in \mathbb{N}$ are integers such that every digraph of minimum out-degree at least $d_i$ contains a subdigraph isomorphic to $F_i$. Then, given any digraph $D$ with minimum out-degree at least, say, $\max\{d_1,\ldots,d_k\}+|V(F_1)|+\cdots+|V(F_k)|$, one can embed disjoint copies of $F_1,\ldots,F_k$ into $D$ one by one. Hence, the following result proves Conjecture~\ref{con:kamak}.
\begin{theorem}\label{thm:main}
   Every grounded tree is $\delta^+$-enforcible.
\end{theorem}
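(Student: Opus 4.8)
The plan is to proceed by induction on the structure of the grounded tree $T$, peeling off vertices ``from the top'' (i.e., by decreasing height) while maintaining many partial embeddings in parallel. Let $h$ be a height function of $T$ that is constant, say equal to $0$, on the set $B$ of vertices of in-degree at least $2$; normalize so that $h$ takes values in $\{-a,\dots,b\}$ with $a,b\ge 0$. Call the vertices of height $\le 0$ the \emph{trunk} and the vertices of height $>0$ the \emph{branches}. Observe that every vertex of height $>0$ has in-degree $\le 1$, so the branches form a collection of directed paths hanging below the trunk; dually, every vertex of height $<0$ has out-degree $1$, so the part of the trunk strictly above height $0$ consists of disjoint directed paths feeding into the root-level set $B\cup\{\text{level-}0\text{ leaves}\}$. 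The key structural point is that a grounded tree is obtained from the ``core'' induced on heights $\le 0$ by attaching pendant out-paths at the bottom, and the core itself is a union of directed paths glued at their bottom endpoints into the in-degree-$\ge 2$ vertices.

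First I would handle the branches: by a greedy argument, in any digraph $D$ of minimum out-degree at least $k$, every vertex $v$ is the start of at least $k(k-1)\cdots(k-\ell+1)$ walks of length $\ell$, hence (for $k$ large relative to $\ell$) of many directed paths of length $\ell$; so once we have embedded the level-$0$ vertices of $T$ appropriately, extending downward along the branches is essentially free, provided we have enough ``room,'' i.e.\ provided the level-$0$ vertices were embedded into a part of $D$ where out-degrees are still large after deleting previously used vertices. So the crux is to embed the core. For this I would build, greedily and in parallel, a large family of embeddings of the core into $D$, using the standard trick that in a digraph of large minimum out-degree one can find long directed paths and, more importantly, many directed paths emanating from a common vertex; gluing such out-path bundles at their endpoints realizes the in-degree-$\ge 2$ structure at level $0$, while the in-paths above level $0$ are realized by choosing the directed paths long enough and then identifying prefixes. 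Concretely, I would embed $B$ first: find a vertex $x_0$ with huge out-degree, route disjoint directed paths from $x_0$ to distinct vertices, and reverse-engineer which of those endpoints can serve as the level-$0$ vertices, using that from each we can still find a private out-path for the branch below it.

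The main obstacle — and where the real work lies — is the \emph{disjointness/room bookkeeping}: naively, embedding one part of $T$ consumes vertices and can destroy the large-out-degree property we need for the next part, so we must quantify how much out-degree survives. The clean way to do this is to set $d(T) := d(T, n)$ via a recursive/tower bound and argue that in a digraph of minimum out-degree $d(T)$ there is a ``robust'' substructure — e.g.\ an exponentially large set of vertices each of which still has out-degree $\ge d(T')$ into that set, for the smaller tree $T'$ obtained by removing the top level — so that the induction goes through. This is exactly the kind of ``iterated out-neighbourhood'' pigeonhole used in Hons et al.\ and in the literature on $\delta^+$-enforcibility; I expect the proof to hinge on choosing the right monotone invariant to induct on (height of $T$, with the level-$0$ in-degrees and the branch lengths tracked) and on a lemma guaranteeing that large minimum out-degree yields many vertex-disjoint long out-paths from a prescribed vertex even after bounded deletions. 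Once that lemma is in place, the assembly — attach branches, then build the level-$0$ bundles, then the in-paths above — is routine greedy embedding, and Theorem~\ref{thm:main} follows.
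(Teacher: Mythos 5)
There is a genuine gap, and it sits exactly at the heart of the problem: your sketch has no mechanism for realizing the vertices of in-degree at least two, nor for enforcing the equal-height (groundedness) constraint at those vertices. Bundles of directed paths routed \emph{out} of a common vertex $x_0$ diverge; they never converge, so ``reverse-engineering which endpoints can serve as the level-$0$ vertices'' does not produce an image vertex with two or more disjoint in-paths of the prescribed lengths. Large minimum out-degree gives you no greedy control over where paths \emph{end}: the set of high in-degree vertices of the host may be tiny and reachable only at the wrong distances, so ``identifying prefixes'' of long out-paths cannot force two branches of the tree to merge at a common vertex at the correct heights. Note also that any correct argument must use groundedness in an essential way (by Theorem~\ref{thm:hons}, non-grounded forests are not $\delta^+$-enforcible), whereas your proposal only uses the height function to name level $0$; the ``iterated out-neighbourhood pigeonhole'' and the lemma about many disjoint out-paths surviving bounded deletions would apply verbatim to non-grounded trees, so they cannot suffice. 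A smaller but symptomatic issue: your structural picture of a grounded tree is wrong --- vertices at heights other than $0$ have in-degree at most $1$, but their out-degrees are unrestricted, so neither the part above nor the part below level $0$ is a disjoint union of directed paths; both are branching out-forests.

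The paper's proof supplies precisely the two ingredients missing here. First, a Lov\'asz-Local-Lemma sparsification (Lemma~\ref{lem: lovasz trick}) passes from a $(k,d)$-broom digraph to a sub-broom digraph whose root set consists of vertices of in-degree at least $d^{1/10}$ in the original digraph; this is what later allows one to attach an additional in-neighbor to an already embedded vertex (Case 2 of Lemma~\ref{lem:keyfortheorem}), i.e., to realize in-degree $\ge 2$. Second, a ``typing'' clean-up (Lemma~\ref{lem: get typed}) guarantees that whether a walk of length $i\le k$ from a given vertex ends in the root set depends only on the starting vertex and on $i$; this is what synchronizes path lengths so that merge points land at equal heights (Case 3 of Lemma~\ref{lem:keyfortheorem}), and it is exactly where the max-grounded structure is exploited. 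The disjointness bookkeeping you flag as the main obstacle is in fact the routine part (handled by the broom-digraph formalism and the notion of a proper copy); without analogues of the two lemmas above, the assembly you call ``routine greedy embedding'' cannot be carried out.
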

As explained in the previous discussion, this implies a characterization of $\delta^+$-enforcible digraphs.
\begin{corollary}
    A digraph $F$ is $\delta^+$-enforcible if and only if it is a grounded forest.
\end{corollary}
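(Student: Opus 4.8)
The plan is to obtain the corollary by assembling the two directions, both of which are now in place. For the ``only if'' direction there is nothing to prove beyond quoting Theorem~\ref{thm:hons}: any $\delta^+$-enforcible digraph is a grounded forest by that result of Hons et al.

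For the ``if'' direction I would start from Theorem~\ref{thm:main}, which settles the connected case, and upgrade it to arbitrary grounded forests via the component-by-component embedding already sketched just before the statement of Theorem~\ref{thm:main}. In detail: let $F$ be a grounded forest with connected components $F_1,\dots,F_k$. Each $F_i$ is a grounded tree, so Theorem~\ref{thm:main} supplies constants $d_1,\dots,d_k\in\mathbb{N}$ such that every digraph of minimum out-degree at least $d_i$ contains a copy of $F_i$. I would then put $n:=|V(F_1)|+\cdots+|V(F_k)|$ and $d(F):=\max\{d_1,\dots,d_k\}+n$, and show that any digraph $D$ with $\delta^+(D)\ge d(F)$ contains a copy of $F$ by embedding $F_1,\dots,F_k$ greedily one at a time: before embedding $F_i$, at most $n$ vertices of $D$ have been used by the previous copies, and deleting these from $D$ decreases the minimum out-degree by at most $n$, leaving a digraph of minimum out-degree at least $\max_j d_j\ge d_i$, into which $F_i$ then embeds disjointly from the earlier copies. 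After $k$ steps one has a copy of $F$ in $D$, so $F$ is $\delta^+$-enforcible, completing the equivalence.

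No genuine difficulty arises, since the two substantive ingredients are exactly the cited Theorem~\ref{thm:hons} and the new Theorem~\ref{thm:main}; the only point requiring (routine) care is the bookkeeping in the forest reduction, namely that deleting a bounded number of vertices changes $\delta^+$ by at most that number, which is immediate from the definition of minimum out-degree.
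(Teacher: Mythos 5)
Your proposal is correct and matches the paper's own argument: the paper derives the corollary exactly by combining Theorem~\ref{thm:hons} for the ``only if'' direction with Theorem~\ref{thm:main} and the same component-by-component greedy embedding (with threshold $\max\{d_1,\ldots,d_k\}+|V(F_1)|+\cdots+|V(F_k)|$) for the ``if'' direction. Nothing further is needed.
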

\paragraph*{Organization.} In the remainder of the paper, we work towards proving Theorem~\ref{thm:main}. We start in Section~\ref{sec:aux} by introducing a key structure in our proof, called \emph{$(k,d)$-broom digraphs}, and by proving several auxiliary statements that show how one can find highly structured ``sub-broom digraphs'' in a given broom digraph. Finally, in Section~\ref{sec:proof} we use these results to embed any given grounded tree (after some slight cleaning) into any given $(k,d)$-broom digraph for sufficiently large $d$, and thus, into any digraph of sufficiently high minimum out-degree, proving Theorem~\ref{thm:main}.
\paragraph*{Notation and Terminology.} For an integer $k\ge 1$ we use $[k]$ as a shorthand for the first $k$ integers. We denote by $V(D)$ and $A(D)\subseteq \{(u,v)\in V(D)^2|u\neq v\}$, respectively, the vertex and arc set of a digraph $D$. For a vertex $v\in V(D)$, we denote by $N_D^+(v), N_D^-(v)$ the out- and in-neighborhood of $v$, respectively, and by $d_D^+(v), d_D^-(v)$ its out- and in-degree in $D$. We denote by $\delta^+(D), \delta^-(D)$ the minimum out- and in-degree of $D$. Given a subset $X$ of $V(D)$ or $A(D)$, we denote by $D-X$ the digraph obtained from $D$ by deleting all elements in $X$ from $D$. Similarly, for $X\subseteq V(D)$ we denote by $D[X]$ the induced subdigraph of $D$ with vertex set $X$.
\section{Auxiliary lemmas}\label{sec:aux}
In this section, we prepare the proof of Theorem~\ref{thm:main} by proving some important auxiliary results. These results are concerned with so-called \emph{$(k,d)$-broom digraphs}, which are certain structured digraphs that will turn out to be useful for embedding grounded trees. Before being able to define a $(k,d)$-broom digraph (Definition~\ref{def:broomdigraph}) we first need to define a special type of out-arborescences that we call \emph{$(k,d)$-brooms}. Recall that an \emph{out-arborescence} is an oriented tree which has a designated root vertex such that all arcs of the tree are oriented away from the root. In the following we say that an out-arborescence is \emph{balanced} if every root-to-leaf path has the same length. The \emph{height} of an out-arborescence is the maximum length of a root-to-leaf path. 
\begin{definition}[Broom]
    Let $k,d\in \mathbb{N}$. A \emph{$(k,d)$-broom $T$ rooted at $r$} is defined to be an out-arborescence with root $r$ such that there exists an integer $1\leq \ell\leq k+1$ for which one of the following holds.
    \begin{itemize}
        \item $\ell\leq k$: $T$ is a balanced out-arborescence of height $\ell$ where all the vertices besides the leaves have out-degree $d$;
        \item $\ell=k+1$: $T$ is obtained from a balanced out-arborescence of height $k+1$ where all vertices except the leaves have out-degree $d$ by subdividing each out-arc of $r$ an arbitrary number of times.
    \end{itemize}
\end{definition}
The \emph{internal vertices} of a $(k,d)$-broom are the vertices which are neither the root nor a leaf.
\begin{definition}[Broom digraph]\label{def:broomdigraph}
    Let $D$ be a digraph and let $R\subseteq V(D)$ be non-empty. We say that $D$ is a \emph{$(k,d)$-broom digraph $D$ with root set $R$} if there are subdigraphs $(T_r)_{r\in R}$ of $D$ such that:
    \begin{itemize}
    \item $D=\bigcup_{r\in R}{T_r}$.
\item For each $r\in R$, we have that $T_r$ is a $(k,d)$-broom with root $r$ such that all leaves are in $R$ and all internal vertices are in $V(D)\setminus R$.
\item For any distinct $r_1, r_2\in R$, we have $V(T_{r_1})\cap V(T_{r_2})\subseteq R$ (that is, distinct broom digraphs are internally disjoint).
    \end{itemize}
\end{definition}
Note that by this definition, every digraph $D$ such that $d_D^+(v)=d$ for every $v\in V(D)$ is a $(k,d)$-broom digraph for any $k\ge 1$. This is because we can simply put $R:=V(D)$ and define, for each $r\in R$, the tree $T_r$ as the out-arborescence of height $1$ consisting of the root $r$ and all $d$ out-arcs of $r$ in $D$. 

After introducing these central definitions, we proceed with several auxiliary results for the proof of Theorem~\ref{thm:main}. We start with a simple observation that will be needed repeatedly later.
\begin{lemma}\label{lem: high degree}
    Let $D$ be a $(k,d)$-broom digraph with root set $R$. Let $v\in V(D)$ and suppose that there is a directed path of length at most $k$ from $v$ to $R$. Then $d_D^+(v)= d$.
\end{lemma}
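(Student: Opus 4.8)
The plan is to split into the cases $v\in R$ and $v\notin R$, and in each of them to identify the unique $(k,d)$-broom of the decomposition that contains $v$ together with all of $v$'s out-arcs in $D$. If $v\in R$, I would argue directly, without even using the distance hypothesis: there is a broom $T_v$ rooted at $v$, in which $v$ has out-degree exactly $d$; and any out-arc $(v,w)\in A(D)$ lies in some $T_r$, where its tail $v$ is a non-leaf, so --- since all internal vertices of $T_r$ lie outside $R$ --- we must have $r=v$. Hence every out-arc of $v$ in $D$ lies in $T_v$, and $d_D^+(v)=d$.

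Now suppose $v\notin R$. Since $D=\bigcup_{r\in R}T_r$, the vertex $v$ lies in at least one broom; and if it lay in two of them then the internal-disjointness condition would force $v\in R$. So $v$ lies in a unique broom $T_r$, in which it is an internal vertex, and exactly as above every out-arc of $v$ in $D$ lies in $T_r$; thus it suffices to show $d_{T_r}^+(v)=d$. To use the hypothesis, I would take a shortest directed path $v=v_0,v_1,\dots,v_m$ with $v_m\in R$, so that $m\le k$ and $v_0,\dots,v_{m-1}\notin R$ by minimality. A short induction shows this path lies entirely in $T_r$: each $v_i$ with $i<m$ lies only in $T_r$, so the arc $(v_i,v_{i+1})$ lies in $T_r$ and hence $v_{i+1}\in V(T_r)$. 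Consequently $v_m\in R\cap V(T_r)$, and since $v_m$ has an incoming arc in $T_r$ it is not the root of $T_r$, so $v_m$ is a leaf of $T_r$.

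Finally I would read off $d_{T_r}^+(v)$ from the two possible shapes of a $(k,d)$-broom. If $T_r$ is of the first type (balanced, height at most $k$), all non-leaves have out-degree $d$, so in particular the internal vertex $v$ does. If $T_r$ is of the second type (height $k+1$ with the root's out-arcs subdivided), the only internal vertices of out-degree other than $d$ are the subdivision vertices; but from any subdivision vertex, every directed path to a leaf of $T_r$ must first reach the corresponding child of the root and then traverse a balanced arborescence of height $k$, hence has length at least $k+1$. Since we exhibited a directed path of length $m\le k$ from $v$ to the leaf $v_m$ of $T_r$, the vertex $v$ cannot be a subdivision vertex, and therefore $d_{T_r}^+(v)=d$. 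I do not expect any genuine obstacle here: the one place an actual inequality is needed is this last distance estimate --- which is exactly where, and why, the bound ``length at most $k$'' appears (and it is sharp) --- while everything else is bookkeeping about which broom a vertex and its out-arcs belong to.
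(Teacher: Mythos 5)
Your proposal is correct and follows essentially the same route as the paper: locate $v$ in the unique broom $T_r$ containing it, show a shortest path from $v$ to $R$ stays inside $T_r$ and ends at a leaf, and use the broom structure together with the length bound $\le k$ to conclude $d_D^+(v)=d$. You are in fact somewhat more explicit than the paper in two places it treats tersely --- that all out-arcs of $v$ in $D$ lie in $T_r$ (needed for equality rather than just $d_D^+(v)\ge d$) and the exact structural reason a vertex within distance $k$ of a leaf cannot be a subdivision vertex --- but these are elaborations, not a different argument.
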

\begin{proof}
    Note that we may assume that $v\in V(D)\setminus R$ as every vertex in $R$ has out-degree $d$. Let $P$ be a shortest directed path from $v$ to $R$. The internal vertices of $P$ are in $V(D)\setminus R$ as otherwise it is not the shortest directed path from $v$ to $R$. Let $r\in R$ be the root of the unique $(k,d)$-broom $T_r$ containing~$v$. Then, there exists a path $P'$ in $T_r$ from $r$ to $v$. Note that $P$ is in $T_r$ as well, since it does not have any internal vertices in $R$. Then, $P'\cup P$ is a directed path in $T_r$ from $r$ through $v$ to a leaf of $T_r$. 
    As $D$ is a $(k,d)$-broom digraph, $T_r$ is a $(k,d)$-broom. Therefore, any vertex on $P'\cup P$ with distance at most $k$ from the endpoint in $P$ has out-degree $d$ in $T_r$. 
    In particular, since $P$ has length at most $k$ and $v\in V(P)$, this implies that $d_D^+(v)=d$, as desired.
\end{proof}
Note that in order to later embed grounded trees into a given $(k,d)$-broom digraph, it will be necessary to find vertices of high in-degree (since those may exist in the grounded tree). Motivated by this,  our next goal will be to establish an auxiliary statement (namely Lemma~\ref{lem: lovasz trick} below) which shows that from any $(k,d)$-broom digraph $D$ with root set $R$ we can move to a subdigraph which is a $(k,d')$-broom digraph for a somewhat smaller but still large value of $d'$, such that all vertices in its new root set $R'\subseteq R$ have high in-degree in $D$. As one ingredient in the proof of this important lemma, we use the classic Lov\'{a}sz local lemma in the following form.
\begin{lemma}[Lov\'{a}sz Local Lemma]\label{lem: lovasz local lemma}
    Let $A_1,\ldots,A_m$ be a sequence of events such that each event occurs with probability at most $p$ and is independent of all the other events except at most $d$ of them. If $epd\leq 1$ then the event $\bigcap_{i=1}^{m}\overline{A_i}$ occurs with positive probability.
\end{lemma}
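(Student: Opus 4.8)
This is a classical result; the plan is to give the standard ``conditioning'' proof, and it is cleanest to first establish the general (asymmetric) Lov\'{a}sz Local Lemma and then specialise. For each $i\in[m]$ I would fix, using the hypothesis, a set $\Gamma(i)\subseteq[m]\setminus\{i\}$ with $|\Gamma(i)|\le d$ such that $A_i$ is \emph{jointly} independent of $\{A_j:j\notin\Gamma(i)\cup\{i\}\}$, and assign to each $i$ a weight $x_i\in[0,1)$. The general lemma asserts that if $\Pr[A_i]\le x_i\prod_{j\in\Gamma(i)}(1-x_j)$ for all $i$, then $\Pr\bigl[\bigcap_{i\in[m]}\overline{A_i}\bigr]\ge\prod_{i\in[m]}(1-x_i)>0$. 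I would then recover the symmetric statement by setting all $x_i=\tfrac1{d+1}$ and invoking the elementary estimate $\bigl(1-\tfrac1{d+1}\bigr)^{d}\ge e^{-1}$, which reduces the product condition to a bound of the required form.

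The heart of the proof is the inequality
\[
\Pr\bigl[A_i \bigm| {\textstyle\bigcap_{j\in S}\overline{A_j}}\bigr]\ \le\ x_i\qquad\text{for all }i\in[m]\text{ and all }S\subseteq[m]\setminus\{i\},
\]
which I would prove by induction on $|S|$, positivity of the conditioning events being established along the way. The base case $S=\emptyset$ is immediate. For the step, put $S_1=S\cap\Gamma(i)$, $S_2=S\setminus\Gamma(i)$ and $C=\bigcap_{j\in S_2}\overline{A_j}$. If $S_1=\emptyset$ then $\Pr[A_i\mid C]=\Pr[A_i]\le x_i$ by joint independence; otherwise
\[
\Pr\bigl[A_i \bigm| {\textstyle\bigcap_{j\in S}\overline{A_j}}\bigr]
\ =\ \frac{\Pr\bigl[A_i\cap\bigcap_{j\in S_1}\overline{A_j}\bigm| C\bigr]}{\Pr\bigl[\bigcap_{j\in S_1}\overline{A_j}\bigm| C\bigr]}
\ \le\ \frac{\Pr[A_i\mid C]}{\Pr\bigl[\bigcap_{j\in S_1}\overline{A_j}\bigm| C\bigr]}
\ =\ \frac{\Pr[A_i]}{\Pr\bigl[\bigcap_{j\in S_1}\overline{A_j}\bigm| C\bigr]},
\]
the last equality using independence of $A_i$ from $\{A_j:j\in S_2\}$. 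Writing $S_1=\{j_1,\dots,j_t\}$ and expanding the denominator as $\prod_{\ell=1}^{t}\bigl(1-\Pr[A_{j_\ell}\mid\overline{A_{j_1}}\cap\cdots\cap\overline{A_{j_{\ell-1}}}\cap C]\bigr)$, I can bound each factor below by $1-x_{j_\ell}$ using the induction hypothesis, which applies because every conditioning set occurring here has size strictly less than $|S|$. Hence the denominator is at least $\prod_{j\in S_1}(1-x_j)\ge\prod_{j\in\Gamma(i)}(1-x_j)$, and combining with $\Pr[A_i]\le x_i\prod_{j\in\Gamma(i)}(1-x_j)$ completes the induction. The lemma then follows by telescoping:
\[
\Pr\bigl[{\textstyle\bigcap_{i\in[m]}\overline{A_i}}\bigr]\ =\ \prod_{i=1}^{m}\Bigl(1-\Pr\bigl[A_i\bigm|{\textstyle\bigcap_{j<i}\overline{A_j}}\bigr]\Bigr)\ \ge\ \prod_{i=1}^{m}(1-x_i)\ >\ 0.
\]

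The argument is routine and I do not expect any genuine obstacle; the points that need a little care are organising the induction so that the conditioning sets really do shrink (in particular treating $S_1=\emptyset$ separately), keeping track of the fact that $A_i$ is jointly --- not merely pairwise --- independent of the family indexed by $S_2$, and, in the final specialisation, verifying the numerical inequality $\bigl(1-\tfrac1{d+1}\bigr)^{d}\ge e^{-1}$ that turns the product condition of the general lemma into the symmetric hypothesis.
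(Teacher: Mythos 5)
The paper does not prove this lemma at all --- it is quoted as the classical Lov\'asz Local Lemma and used as a black box --- so there is no internal proof to compare against. Your route (prove the asymmetric local lemma by induction on the size of the conditioning set, then specialise) is the standard textbook argument, and the core of it is sound: the split $S=S_1\cup S_2$, the separate treatment of $S_1=\emptyset$, the use of \emph{joint} independence of $A_i$ from $\{A_j:j\in S_2\}$ to replace $\Pr[A_i\mid C]$ by $\Pr[A_i]$, the telescoping of the denominator with conditioning sets of size strictly less than $|S|$, and the final product bound are all correct as sketched.

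The one genuine problem is the final numerical specialisation, i.e.\ exactly the step you describe as ``reduces the product condition to a bound of the required form''. With $x_i=\tfrac1{d+1}$ you need $\Pr[A_i]\le \tfrac1{d+1}\bigl(1-\tfrac1{d+1}\bigr)^{d}$, and the estimate $\bigl(1-\tfrac1{d+1}\bigr)^{d}\ge e^{-1}$ turns this into the hypothesis $ep(d+1)\le 1$, \emph{not} the stated $epd\le 1$: since $\tfrac1{ed}>\tfrac1{e(d+1)}$, the paper's hypothesis does not imply the condition you verify. Moreover this cannot be fixed by a better choice of weights: in the symmetric setting the asymmetric lemma gives at best the threshold $\max_{x}x(1-x)^{d}=d^{d}/(d+1)^{d+1}$, which is strictly below $\tfrac1{ed}$ for every $d\ge 1$, so the lemma in the literal form $epd\le 1$ (with $d$ counting only the \emph{other} dependent events) is not a corollary of the asymmetric local lemma at all; it follows instead from Shearer's sharp threshold $(d-1)^{d-1}/d^{d}\ge \tfrac1{ed}$. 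In the context of this paper the discrepancy is immaterial --- in the only application, Lemma~\ref{lem: lovasz trick}, one has $p=\max\{\exp(-d^{1/3}/8),d^{-17/15}\}$, which is far below $\tfrac1{e(d+1)}$, so the $ep(d+1)\le 1$ version suffices --- but as a proof of the statement exactly as written, your last step leaves a constant-factor gap that you would have to close either by invoking Shearer's bound or by weakening the stated hypothesis to $ep(d+1)\le 1$.
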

As further preparation before the proof of Lemma~\ref{lem: lovasz trick}, we need the following statement about pruning an out-arborescence to obtain a broom.
\begin{lemma}\label{lem: get broom from degrees}
    Let $k, d\in \mathbb{N}$ and let $T$ be an out-arborescence with root $r$ such that $d_T^+(r) \geq d$. Suppose that on every directed path in $T$ with vertex sequence $r=v_1,\ldots,v_h$ from $r$ to a leaf $v_h$, it holds that $d_T^+(v_i)\geq d$, for every index $i$ satisfying $\max\{1,h-k+1\}\leq i\leq h-1$. Then, $T$ contains a $(k-1,\lceil d/k\rceil)$-broom rooted at $r$ whose leaves are leaves of $T$.
\end{lemma}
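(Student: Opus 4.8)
The plan is to introduce a notion of \emph{richness} of vertices and reduce the lemma to a pigeonhole over the children of $r$. Throughout write $d':=\lceil d/k\rceil$, and for a vertex $v$ let $T_v$ denote the sub-out-arborescence of $T$ rooted at $v$. For an integer $j\ge 0$, call $v$ \emph{$j$-rich} if $T_v$ contains a balanced out-arborescence of height $j$ rooted at $v$ in which every non-leaf has out-degree $d'$ and every leaf is a leaf of $T$; equivalently, $v$ is $0$-rich iff $v$ is a leaf of $T$, and for $j\ge 1$, $v$ is $j$-rich iff at least $d'$ children of $v$ are $(j-1)$-rich. Note that a $j$-rich vertex has a leaf of $T$ at distance exactly $j$ below it. The whole argument rests on the following \textbf{Claim}, which I would prove first: \emph{every vertex $v$ of $T$ is $j$-rich for some $j\in\{0,1,\dots,k-2\}$, or $T_v$ contains a $(k-1)$-rich vertex.}

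\emph{Deducing the lemma from the Claim.} Since $d_T^+(r)\ge d$, the root has at least $d$ children. Give each child $c$ a label in the $k$-element set $\{0,1,\dots,k-2\}\cup\{\star\}$: the label is some $j\le k-2$ for which $c$ is $j$-rich if such $j$ exists, and $\star$ otherwise, in which case the Claim provides a $(k-1)$-rich vertex in $T_c$. By pigeonhole some label is used by at least $d'$ children. If it is some $j\in\{0,\dots,k-2\}$, I pick $d'$ such children and, in the subtree below each, a balanced $d'$-ary out-arborescence of height $j$ rooted at that child with all leaves being leaves of $T$; as these lie in pairwise disjoint subtrees, their union with $r$ is a balanced $d'$-ary out-arborescence of height $j+1\le k-1$ rooted at $r$ with leaves in $T$, i.e.\ a $(k-1,d')$-broom with $\ell=j+1$. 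If the label is $\star$, I pick $d'$ children $c_1,\dots,c_{d'}$, in each $T_{c_i}$ a $(k-1)$-rich vertex $w_i$ together with the directed $c_i$-to-$w_i$ path and a balanced $d'$-ary out-arborescence of height $k-1$ rooted at $w_i$ with leaves in $T$; gluing these at $r$ (so that the $d'$ arcs leaving $r$ are subdivided into the paths $r,c_i,\dots,w_i$) yields a $(k-1,d')$-broom with $\ell=k$, again with leaves in $T$. Either way the desired broom is obtained. (When $d'=1$ this just produces a single directed path from $r$ to a leaf of $T$, which is also a valid broom, so no separate treatment of that case is needed.)

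\emph{Proving the Claim, by induction on $|V(T_v)|$.} If $v$ is a leaf it is $0$-rich. Otherwise, if $T_v$ already contains a $(k-1)$-rich vertex we are done, so suppose not, and suppose for contradiction that $v$ is not $j$-rich for any $j\le k-2$; then $v$ is not $j$-rich for any $j\le k-1$, since being $(k-1)$-rich would make $v$ itself such a vertex. \textbf{Case 1:} $v$ has a leaf of $T$ at distance at most $k-1$ below it. Then the hypothesis of the lemma gives $d_T^+(v)\ge d$ (as $v$ lies within distance $k-1$ of a leaf along a root-to-leaf path). For each child $c$ of $v$, the induction hypothesis applied to $c$, together with the fact that $T_c\subseteq T_v$ contains no $(k-1)$-rich vertex, forces $c$ to be $t$-rich for some $t\le k-2$; thus every child of $v$ is $t$-rich for some $t\in\{0,\dots,k-2\}$. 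On the other hand, for each $t\in\{0,\dots,k-2\}$ fewer than $d'$ children of $v$ are $t$-rich, as otherwise $v$ would be $(t+1)$-rich. Hence $v$ has at most $(k-1)(d'-1)$ children, but $(k-1)(d'-1)<(k-1)\cdot\frac dk<d\le d_T^+(v)$, a contradiction. \textbf{Case 2:} every leaf of $T$ below $v$ is at distance at least $k$ from $v$. Take a shortest directed path from $v$ to a leaf of $T$, say of length $L\ge k$, and let $u$ be its vertex at distance $L-(k-1)\ge 1$ from $v$. Then $u\ne v$ and every leaf of $T$ below $u$ is at distance at least $k-1$ from $u$, so $u$ is not $j$-rich for any $j\le k-2$; by the induction hypothesis applied to $u$, the subtree $T_u\subseteq T_v$ must then contain a $(k-1)$-rich vertex, contradicting our assumption. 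This proves the Claim.

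\emph{Expected main obstacle.} The crux is identifying the right inductive statement. It is tempting to look directly for a $(k-1)$-rich vertex, but such a vertex need not exist (for instance when $T$ has small height), so the Claim must also allow the weaker alternative ``$j$-rich for some $j\le k-2$''. With this in place, the factor-$k$ loss in the broom's out-degree appears precisely as the $k$-way pigeonhole over the possible labels, and the inequality $(k-1)(d'-1)<d$ is exactly what makes the pigeonhole bite; the remaining verifications (that the glued objects are genuine $(k-1,d')$-brooms with leaves in $T$) are routine bookkeeping with the definition of a broom.
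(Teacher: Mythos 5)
Your proof is correct, and it is essentially the paper's argument in a different packaging: your notion of a $j$-rich vertex corresponds to the paper's bottom-up label $\phi$ (the height, capped at $k-1$, of the balanced $\lceil d/k\rceil$-ary sub-broom growable below a vertex), your Case~1 counting $(k-1)(d'-1)<d$ is the contrapositive of the pigeonhole the paper builds into the definition of $\varphi$, and your $\star$-case descent to a $(k-1)$-rich vertex inside each child's subtree mirrors the paper's choice of the last $\phi$-value-$(k-1)$ vertex on disjoint root-to-leaf paths to produce the subdivided ($\ell=k$) broom. So: correct, and essentially the same approach.
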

\begin{proof}
    Consider a function $\phi: V(T)\to\{0,\ldots,k-1\}$ defined as follows. Let $(u_1,\ldots,u_n=r)$ be a linear ordering of $V(T)$ such that $u_i$ is a leaf of in-degree $1$ in $T[u_i,\ldots,u_n]$ for every $i\in [n-1]$. We now iteratively define $\phi(u_i)$ for $i=1,\ldots,n$ as follows: If $u_i$ is a leaf of $T$ then we define $\phi(u_i):=0$. Otherwise, we define $\phi(u_i):=\min\{\varphi(u_i)+1,k-1\}$ where $\varphi(u_i)\in \{0,\ldots,k-1\}$ is defined as the smallest index such that there exist at least $\lceil d_T^+(u_i)/k\rceil$ out-neighbors $x$ of $u_i$ with $\phi(x)=\varphi(u_i)$. This is well-defined, since by choice of the ordering $(u_1,\ldots,u_n)$, $\phi$ has already been defined on $N_T^+(u_i)$ when processing $u_i$. Let $H\subseteq T$ be the subdigraph defined by keeping only the arcs of the form $(u,v)\in V(T)$ such that $\phi(v)=\varphi(u)$. Note that by definition of $\phi$, we then have $d_H^+(u)\ge \lceil d_T^+(u)/k\rceil$ for every $u\in V(H)$.
    
    Let $T'\subseteq T$ be the out-arborescence rooted at $r$ that forms the unique connected component of $H$ containing $r$. We then have $d_{T'}^+(u)=d_{H}^+(u)\geq d_T^+(u)/k$ for every $u\in V(T')$. In particular, $u$ is a leaf of $T'$ only if it was also a leaf of $T$. Therefore, a directed path $v_1,\ldots,v_h$ on $T'$ from $r=v_1$ to a leaf $v_h$ is also a root-to-leaf path in $T$ and hence it still holds that $d_T^+(v_i)\ge d$ and thus, $d_{T'}^+(v_i)\ge \lceil d/k\rceil$, for all indices $i$ such that $\max\{1,h-k+1\}\leq i\leq h-1$. 
    
    For each $u\in V(T')$, denote by $T_u\subseteq T'$ the subtree of $T'$ rooted at $u$ (which also forms an out-arborescence). Suppose for a moment that $\varphi(u)\leq k-2$. Then, $\phi(u)=\min\{\varphi(u)+1,k-1\}=\varphi(u)+1$. By the definition of $\phi$ and the choice of $T'$, we get that every path from $u$ to a leaf in $T_u$ has length $\phi(u)$ (since the value of $\phi$ decreases by one at each step along the path).  Next, let us consider an arbitrary vertex $w\in V(T_u)$ which is not a leaf. Then, there exists a directed path $P$ in $T'$ from $r$ through $w$ to a leaf. Note that $P$ contains $u$ and, thus, also a sub-path from $u$ through $w$ to a leaf. Therefore, $w$ is among the last $\phi(u)+1\leq k$ vertices on $P$. By our observation above, this implies that $d_{T'}^+(w)\geq \lceil d/k\rceil$. Summarizing, we have shown that every non-leaf $w\in V(T_u)$ has out-degree at least $\lceil d/k\rceil$ and that $T_u$ is balanced with height $\phi(u)$. By pruning this tree, it follows that for every $u\in V(T')$ with $\varphi(u)\leq k-2$ there exists $T_u'\subseteq T_u$ which is a balanced out-arborescence of height $\phi(u)$ where all vertices which are not leaves have out-degree exactly $\lceil d/k\rceil$, and such that all leaves of $T_u'$ are also leaves of $T$. 
    
    Proceeding with the main argument, let us first consider the case that $\varphi(r)\leq k-2$. Then, $T_r'$ is a $(k-1,\lceil d/k\rceil)$-broom of height $\phi(r)$ and we are done. Next, consider the remaining case that $\varphi(r)=k-1$. Note that $d_{T'}^+(r)\ge d_T^+(r)/k\ge d/k$ by our assumption on $r$ in the lemma. Let $P_1,\ldots,P_{\lceil d/k\rceil}$ be a collection of directed paths in $T'$ from $r$ to a leaf which are vertex-disjoint besides $r$. For $1\leq i\leq \lceil d/k\rceil$, let $u_i$ be the last vertex on $P_i$ with $\phi(u_i)=k-1$. Note that this is well-defined, since $\phi$ changes by at most $1$ when moving along any arc of $T'$, and since $\phi(r)=k-1$ and $\phi(x)=0$ for every leaf $x$ of $T'$. Let $P_i'\subseteq P_i$ denote the sub-path from $r$ to $u_i$. Note that the choice of $u_i$ implies that $\varphi(u_i)=k-2$, and hence $u_i\neq r$ for every $i$. 
    Finally, consider the out-arborescence $T''$ obtained as the union of $P_i'$ as well as $T_{u_i}'$ for $1\leq i\leq \lceil d/k\rceil$. One easily checks that this is now a $(k-1,\lceil d/k\rceil)$-broom, and all its leaves are also leaves of $T'$. This concludes the proof.
\end{proof}
With Lemma~\ref{lem: lovasz local lemma} and Lemma~\ref{lem: get broom from degrees} at hand, we are now ready to formulate and prove the aforementioned result about finding a sub-broom digraph with root-set of high in-degree.
\begin{lemma}\label{lem: lovasz trick}
    Let $d,k\in \mathbb{N}$ be such that $d\ge \max\{(4k)^6,5\cdot 10^{12}\}$ and let $D$ be a $(k,d)$-broom digraph with root set $R$. Then, there exists a $(k,\lfloor d^{1/(7k)}\rfloor)$-broom digraph $D'\subseteq D$ with root set $R'\subseteq R$ such that $d^{-}_D(r)\geq d^{1/10}$ for all $r\in R'$.
\end{lemma}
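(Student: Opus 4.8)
Call a root $r\in R$ \emph{good} if $d_D^-(r)\ge d^{1/10}$ and \emph{bad} otherwise, and write $G$ and $B=R\setminus G$ for the sets of good and bad roots. Since in-degrees are taken in the fixed digraph $D$, the root set of the sought-after sub-broom-digraph $D'$ is necessarily a subset of $G$. So the task is to pick $R'\subseteq G$ and, for each $r\in R'$, to exhibit a $(k,d')$-broom $T_r'\subseteq D$ rooted at $r$ with all leaves in $R'$ and all internal vertices outside $R'$. The slack we have is that the internal vertices of $T_r'$ are only forbidden from lying in $R'$, not in $R$: a new broom may run through bad roots, and through good roots we do not retain. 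So, roughly, we must route brooms from good roots to good roots, passing through bad ones.

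The first ingredient is an \emph{abundance} statement: for an intermediate parameter $d_1$ (a fixed power of $d$), every good root $r$ has a $(k,d_1)$-broom inside $D$ all of whose leaves lie in $G$ — or at least this holds after discarding a controlled set of exceptional good roots. Brooms of type $\ell\le k$ do not suffice on their own, because the broom $T_r$ of $D$ rooted at $r$ may have very few good leaves; the decisive tool is the type $\ell=k+1$, whose root-out-arcs may be subdivided arbitrarily. Its ``handles'' let us walk forward through arbitrarily long stretches of bad roots — every vertex of $D$ has out-degree exactly $d$, except for subdivision vertices on handles (out-degree $1$), cf.\ Lemma~\ref{lem: high degree} — until reaching vertices from which a balanced $d_1$-ary out-arborescence of height $k$ with all leaves in $G$ emanates. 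Concretely, I would define the set $W$ of such ``$k$-branching'' vertices by downward recursion from $G$, show $W\ne\emptyset$ via the weight count $\sum_{r\in R}d_D^-(r)=\sum_{r\in R}(\text{number of leaves of }T_r)\ge d|R|$ (hence $\sum_{r\in G}d_D^-(r)\ge|R|(d-d^{1/10})$, so many brooms have many good leaves), and then show by a Menger-type argument — ruling out small vertex separators via the uniform out-degree lower bound — that every good root reaches $d_1$ vertices of $W$ along internally disjoint paths; concatenating these paths with the height-$k$ trees below their endpoints produces the $(k,d_1)$-broom. The estimate $\sum_{r\in B}d_D^-(r)<|B|\,d^{1/10}$ enters to bound the number of ``obstructing'' roots (e.g.\ roots whose brooms are mostly bad) that must be avoided.

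The second ingredient turns ``every good root has such a broom'' into a single consistent choice, and this is where the Lovász Local Lemma (Lemma~\ref{lem: lovasz local lemma}) is used. I would randomly sparsify $D$ — independently at each vertex of out-degree $d$, keep a uniformly random set of $d_2$ out-arcs, leaving handle-arcs alone — and independently $2$-colour $G$ into the prospective root set $R'$ and a set of good roots henceforth only allowed to be internal vertices. For a good root $r$, the bad event is that, in the sparsified and coloured digraph, $r$ no longer has a $(k,\Theta(d_2))$-broom whose leaves all lie in $R'$; since the randomised part of such a broom stays within a bounded-radius out-neighbourhood of $r$ (the handles, being plain paths, are handled deterministically), this event depends on only $d^{O(k)}$ of the random choices and — being failure of about $d_2^{\,k}$ near-independent binomial trials — has probability at most $d^{-\omega(k)}$, so $epd\le1$ and the Local Lemma applies. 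A final application of Lemma~\ref{lem: get broom from degrees} turns the resulting high-out-degree out-arborescences into honest $(k,d')$-brooms with $d'\ge\lfloor d^{1/(7k)}\rfloor$; verifying that the cumulative loss $d\to d_1\to d_2\to d'$ stays within this bound is where the hypotheses $d\ge(4k)^6$ and $d\ge5\cdot10^{12}$ are spent.

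The step I expect to be the main obstacle is marrying these two ingredients in the presence of potentially huge in-degrees. Because we are forced to take $R'\subseteq G$ — there is no way to ``promote'' a bad root — and a single good root may be a leaf of enormously many brooms, the naive random experiment ``retain each good root independently'' has unbounded dependency degree, so the Local Lemma does not apply to it directly. Engineering a random experiment whose bad events are genuinely \emph{local in the forward direction}, so that dependencies are bounded purely in terms of $d$ and $k$, is precisely what the broom structure — and in particular the arbitrarily long handles of type-$(k+1)$ brooms — is designed to make possible; carrying this out, hand in hand with the Menger-type abundance argument, is in my view the technical heart of the lemma.
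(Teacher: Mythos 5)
There is a genuine gap here, and you in fact point at it yourself in your last paragraph without resolving it. Your plan pre-commits to choosing $R'$ as a random subset of the good roots (the independent $2$-colouring of $G$) and takes as bad events ``good root $r$ fails to have a broom with all leaves in $R'$''; as you concede, a good root can be a leaf of enormously many brooms, so these events have unbounded dependency degree and the Local Lemma does not apply as stated. Declaring that engineering a suitably local experiment is ``the technical heart'' is exactly right, but no such experiment is exhibited, so the proof is not there. The same is true of your first ingredient: the Menger-type claim that every good root reaches $d_1$ ``$k$-branching'' vertices along internally disjoint paths is asserted, not proved, and the counting estimate $\sum_{r\in R}d_D^-(r)\ge d|R|$ does not by itself rule out that the good leaves of a particular broom are concentrated behind a small separator. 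Neither ingredient is carried out, and neither is needed in the paper's argument.

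The idea you are missing is to \emph{not} choose $R'$ in advance at all, but to let the sparsification define it. The paper keeps each arc leaving a vertex of out-degree $d$ independently with probability $d^{-2/3}$ (arcs leaving subdivision vertices are kept deterministically), and applies the Local Lemma only to genuinely local events: for each vertex $u$ of out-degree $d$, the Chernoff-type event that fewer than $d^{1/3}/2$ of its out-arcs survive, and for each \emph{bad} root $w$ (in-degree $<d^{1/10}$ in $D$), the event that at least two of its in-arcs survive --- the latter has probability at most $\binom{d^{1/10}}{2}d^{-4/3}$ precisely because bad roots have small in-degree, and each event depends on at most $d$ others. In the surviving digraph $H$ one then sets $R'$ to be the set of vertices of in-degree at least $2$ in $H$; since only vertices of $R$ can have in-degree $\ge 2$ in $D$ and no bad root does so in $H$, automatically $R'\subseteq R$ consists of roots with $d_D^-(r)\ge d^{1/10}$. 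All the structural work you wanted Menger for then comes free and deterministically: every vertex outside $R'$ has in-degree at most $1$ in $H$, so the reachability sets from distinct $r\in R'$ in $H-(R'\setminus\{r\})$ are out-arborescences that are pairwise disjoint; pruning at vertices that lose at least half their surviving out-degree forces each leaf of the pruned arborescence to have $\approx d^{1/3}/4$ out-neighbours in $R'\setminus\{r\}$ (since $\delta^+(H)\ge 1$, a vertex can only become a leaf by sending many arcs into $R'$); Lemma~\ref{lem: high degree} gives full out-degree $d$ in $D$ near these leaves, Lemma~\ref{lem: get broom from degrees} extracts a $(k-1,\cdot)$-broom, and one final greedy level into $R'\setminus\{r\}$ yields the $(k,\lfloor d^{1/(7k)}\rfloor)$-brooms whose union is $D'$. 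So while your high-level toolkit (sparsify, LLL, Lemma~\ref{lem: get broom from degrees}) is the right one, the specific randomized construction you propose is the one the paper's trick is designed to avoid, and as written your argument does not go through.
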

\begin{proof}
    Let $U:=\{u\in V(D)|d_D^+(u)=d\}$ and let $W:=\{w\in R|d_D^-(w)<d^{1/10}\}$. Note that by definition of a $(k,d)$-broom, we have $R\subseteq U$. For each arc $(u,v)\in A(D)$ with $u\in U$, let $X_{(u,v)}$ be a Bernoulli random variable that takes value $1$ with probability $d^{-2/3}$. For arcs $(u,v)$ with $u\notin U$, we set $X_{(u,v)}=1$. Note that by definition of a $(k,d)$-broom digraph, this does not include any arc ending in a vertex in $R$. In particular, for every arc $e$ ending in $W\subseteq R$ we have that $X_e=1$ with probability $d^{-2/3}$. Let $H\subseteq D$ denote the random subdigraph obtained by keeping only the arcs $e$ for which $X_e=1$.
    
    For each $u\in U$, let $A_u$ be the random event that $d^+_{H}(u)\leq d^{1/3}/2$. A simple application of Chernoff's inequality shows that $\mathbb{P}[A_u]\le \exp(-d^{1/3}/8)$. For each $w\in W$, let $B_w$ be the random event that $d^-_H(w)\geq 2$. By a union bound over all potential choices of two incoming edges, we obtain $\mathbb{P}[B_w]\le \binom{d^{1/10}}{2}d^{-4/3}\leq d^{-17/15}$. Furthermore, the events $A_u$ depend on at most $d$ events of the form $B_w$ and are independent of all other events. The events $B_w$ depend on at most $d^{1/10}\leq d$ events of the form $A_u$ and are independent of all other events. Using our assumption on $d$, one now easily checks that $epd\le 1$ for $p:=\max\{\exp(-d^{1/3}/8), d^{-17/15}\}$ and hence we can apply Lemma~\ref{lem: lovasz local lemma}, which implies that with positive probability none of the events $A_u, u\in U$ or $B_w, w \in W$ occur. 
    
    In the following, with a slight abuse of notation, we use $H$ to denote an \emph{instance} of the random subdigraph $H$ for which none of the mentioned events occur, that is, $d_H^+(u)>d^{1/3}/2$ for all $u\in U$ and $d_H^-(w)\le 1$ for all $w\in W$. Also note, that by definition of $H$, we have $d_H^+(x)=d_D^+(x)\ge 1$ for every $x\notin U$, and hence $\delta^+(H)\ge 1$.

    Let $R'$ denote the set of vertices with in-degree at least $2$ in $H$ and note that $R'$ is non-empty. Observe that $R'\subseteq R\setminus W$. For each vertex $r\in R'$, let us denote by $U_r$ be the set of vertices reachable with a directed path starting from $r$ in the digraph $H-(R'\setminus\{r\})$. Since all the vertices in $V(H)\setminus R'$ have in-degree at most $1$, it follows that the sets $U_r, r\in R'$ are pairwise disjoint. 
    
    For each $r\in R'$, let $D_r$ be the digraph obtained from $H[U_r]$ by removing all incoming arcs of~$r$. Observe that $D_r$ is an out-arborescence with root $r$, as $r$ has in-degree $0$ and all other vertices have in-degree $1$ and are reachable from $r$. Let $L_r\subseteq U_r$ be the set of vertices $u\in V(D_r)$ with $d_{D_r}^+(u)\leq d_H^+(u)/2$ and let $S_r\subseteq D_r$ denote the out-arborescence rooted at $r$ that is induced by all the vertices reachable from $r$ via directed paths in $D_r$ that do not use an arc starting in $L_r$. Note that $V(S_r)\cap L_r$ is exactly the set of leaves of $S_r$. 

    We claim that each leaf $l$ of $S_r$ must have (in $H$) at least one out-neighbor in $R$. Indeed, suppose not. Then $d_{H}^+(l)\ge 1$ since $\delta^+(H)\ge 1$, and at the same time $d_H^+(l)=|N_H^+(l)\setminus R'|= d_{D_r}^+(l)$. Therefore, $l\notin L_r$ which contradicts the assumption that $l$ is a leaf.
    
    Next, consider any vertex $u\in V(S_r)\subseteq U_r$. Then, we either have $u\in L_r$, in which case $u$ is a leaf of $S_r$ and satisfies $$|N_H^+(u)\cap (R'\setminus \{r\})|=d_H^+(u)-d_{H-(R'\setminus \{r\})}^+(u)=d_H^+(u)-d_{H[U_r]}^+(u)\ge d_H^+(u)-d_{D_r}^+(u)-1\ge \frac{d_H^+(u)}{2}-1,$$ or we have $u\notin L_r$, in which case, by definition of $S_r$ and $L_r$, we have $d_{S_r}^+(u)=d_{D_r}^+(u)> \frac{d_{H}^+(u)}{2}$.
    
    Recall that $D$ is a $(k,d)$-broom digraph and $S_r\subseteq D$. Let $v_1,\ldots,v_h$ be the vertex-sequence of any directed path $P$ from the root $r=v_1$ to a leaf $v_h$ in $S_r$. Since $v_h$ has an out-neighbor in $R$, it follows that, for $h-k+1\leq i\leq h$, $v_i$ has a directed path (in $D$) of length at most $k$ to a vertex in $R$. By Lemma~\ref{lem: high degree}, it follows that $d_D^+(v_i)=d$ and thus $v_i\in U$ by definition. Therefore, by our choice of $H$, we have that $d_{S_r}^+(v_i)>d_H^+(v_i)/2\ge d^{1/3}/4$ for $h-k+1\leq i\leq h-1$. Furthermore, since $v_h$ is a leaf of $S_r$ we have $v_h\in L_r$ and thus, by the above, $v_h$ has at least $d_H^+(v_h)/2-1\ge d^{1/3}/4-1$ out-neighbors in $R'\setminus \{r\}$. 
    
        Observe that by the freedom of choice of $P$ in this argument, we may also conclude that indeed every leaf of $S_r$ has at least $d^{1/3}/4-1$ out-neighbors in $R'\setminus \{r\}$.

    Let $d':=\lceil d^{1/3}/4\rceil$. 
    As we have shown that every vertex in $S_r$ at distance between $1$ and $k-1$ from a leaf has out-degree at least $d'$ in $S_r$, we can now apply Lemma~\ref{lem: get broom from degrees} to the out-arborescence $S_r$. This yields the existence of a $(k-1,\lceil d'/k\rceil)$-broom that is contained in $S_r$, rooted at $r$ and such that all its leaves are also leaves of $S_r$. One can easily check that our assumption on $d$ in the lemma implies that we have $d'/k\ge d^{1/(7k)}$, and hence we can, in fact, prune the above $(k-1,\lceil d'/k\rceil)$-broom to obtain a $(k-1,\lfloor d^{1/(7k)}\rfloor)$-broom $B_r$ contained in $S_r$ with root $r$ and all leaves are also leaves of $S_r$. 
    
    Note that it follows, by definition of a $(k-1,\lfloor d^{1/(7k)}\rfloor)$-broom, that $B_r$ has at most $(d^{1/(7k)})^k=d^{1/7}$ leaves. Since furthermore, by the above, every leaf of $B_r$ has at least $d^{1/3}/4-1$ out-neighbors in $R'\setminus \{r\}$, we may extend $B_r$ to a $(k,\lfloor d^{1/(7k)}\rfloor)$-broom $T_r$ by greedily choosing a subset of $\lfloor d^{1/(7k)}\rfloor$ out-neighbors in $R'\setminus \{r\}$ disjoint from all the other already chosen vertices for each leaf of $B_r$ (in some order). Note that this is indeed possible, since at any point in the process the current leaf has at least $d^{1/3}/4-1-d^{1/(7k)}\cdot (d^{1/7}-1)\ge d^{1/7k}$ remaining leafs to choose from, where the latter inequality follows by our assumptions on $d$ in the lemma.
    
    Note that the root and leaves of $T_r$ are in $R'$ while the internal vertices are all contained in $V(H)\setminus R'$. Let $D'\subseteq H$ be the union of all the brooms $(T_r)_{r\in R'}$ defined as above. Note that $D'$ is a $(k,\lfloor d^{1/(7k)}\rfloor)$-broom digraph with root set $R'\subseteq R$. Since $R'\subseteq R\setminus W$, we have $d_D^-(r)\ge d^{1/10}$ for every $r\in R'$, as desired. This concludes the proof of the lemma.
\end{proof}

After having proven the first key auxiliary statement, let us now turn to the second one. The second main idea of the proof of Theorem~\ref{thm:main} will be to ``clean'' a given $(k,d)$-broom digraph in such a way that we can predict the positions of intersection points with the root set for any directed path up to a certain length, just based on knowing its starting point. This will later enable us to control path lengths when attempting to embed a grounded tree into a $(k,d)$-broom digraph. We start by formally fixing the properties we need in the next definition. 

\begin{definition}[Typed]
    Let $D$ be a $(k,d)$-broom digraph with root set $R$. Let further $\mathbf{a}=(\mathbf{a}_1,\ldots,\mathbf{a}_t)\in \{0,1\}^{t}$ be a binary word. We say that a vertex $v\in V(D)$ \emph{has $t$-type $\mathbf{a}$} if for every $i=1,\ldots,t$ the following holds: For every vertex $u$ in $D$ that can be reached from $v$ via a directed walk in $D$ of length exactly $i$, we have that $u\in R$ if and only if $\mathbf{a}_{i}=1$. We further say that \emph{$D$ is $t$-typed} if for every $v\in V(D)$ there exists some $\mathbf{a}\in \{0,1\}^{t}$ such that $v$ has $t$-type $\mathbf{a}$. 
    Note that, by this definition, every $(k,d)$-broom digraph is (vacuously) $0$-typed.
\end{definition}
The next statement is a simple preparatory Ramsey-type result that allows us to pass from any given out-arborescence with a coloring of its leaves to a sub-arborescence whose leaf-set is monochromatic and which preserves a good fraction of the out-degrees.
\begin{lemma}\label{lem:monochromatic}
     Let $T$ be an out-arborescence with root $r$ equipped with a coloring of its leaves with up to $C$ colors. Then, there exists an out-arborescence $T'\subseteq T$ rooted at $r$ such that the leaves of $T'$ are a monochromatic subset of the leaves of $T$ and for all $u\in V(T')$, it holds that $d_{T'}^+(u)\geq d_T^+(u)/C$.
\end{lemma}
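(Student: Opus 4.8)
The plan is to prove the statement by induction on the height $h$ of the out-arborescence $T$. The base case $h=0$ is trivial: then $T$ consists of the single vertex $r$, which is a leaf, so we may take $T'=T$; the monochromaticity condition holds vacuously and the out-degree condition is empty since $d_T^+(r)=0$.

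For the inductive step, suppose $h\geq 1$ and let $v_1,\dots,v_m$ be the out-neighbors of $r$ in $T$, so that $m=d_T^+(r)\geq 1$. For each $i\in[m]$, let $T_i$ denote the sub-arborescence of $T$ rooted at $v_i$, i.e.\ the subtree consisting of $v_i$ together with all of its descendants. These subtrees are pairwise vertex-disjoint and each has height at most $h-1$. Moreover, since every out-neighbor in $T$ of a vertex of $T_i$ again lies in $T_i$, the leaves of $T_i$ are exactly the leaves of $T$ lying in $V(T_i)$; in particular they inherit the coloring with up to $C$ colors. Applying the inductive hypothesis to each $T_i$, we obtain a sub-arborescence $T_i'\subseteq T_i$ rooted at $v_i$ whose leaves form a monochromatic subset --- say of color $c_i$ --- of the leaves of $T_i$ (hence of the leaves of $T$), and which satisfies $d_{T_i'}^+(u)\geq d_{T_i}^+(u)/C=d_T^+(u)/C$ for all $u\in V(T_i')$.

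Now apply the pigeonhole principle to the colors $c_1,\dots,c_m$: there is a color $c$ and a set $I\subseteq[m]$ with $c_i=c$ for all $i\in I$ and $\ab{I}\geq m/C$. Let $T'$ be the digraph consisting of the vertex $r$, the arcs $(r,v_i)$ for $i\in I$, and the trees $T_i'$ for $i\in I$. Because the $T_i$ are vertex-disjoint, $T'$ is a well-defined out-arborescence rooted at $r$; its leaves are precisely the union over $i\in I$ of the leaves of $T_i'$, all of color $c$ and all leaves of $T$. Finally $d_{T'}^+(r)=\ab{I}\geq m/C=d_T^+(r)/C$, while $d_{T'}^+(u)=d_{T_i'}^+(u)\geq d_T^+(u)/C$ for every $u\in V(T_i')$, $i\in I$, since each such $u$ keeps exactly the out-degree it had in $T_i'$. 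This completes the induction.

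There is no serious obstacle here; the argument is a routine combination of induction on height with a single pigeonhole step on the children of the root. The only points requiring a little care are purely bookkeeping: verifying that the leaves of the subtrees $T_i$ (and hence of the chosen $T_i'$) really are leaves of $T$, that the subtrees hanging off distinct children of $r$ are vertex-disjoint so that gluing them back together yields an out-arborescence, and that each application of the inductive hypothesis loses only a factor $C$ in the out-degrees, with no compounding across levels.
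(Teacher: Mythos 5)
Your proof is correct and follows essentially the same strategy as the paper: induction on the height of the arborescence combined with a single pigeonhole step per vertex, so that every vertex loses only one factor of $C$ and nothing compounds across levels. The only cosmetic difference is the direction of the induction---you decompose at the root into the child subtrees and pigeonhole on their monochromatic colours, whereas the paper peels off the bottom layer, pigeonholes at the parents of the deepest leaves, and then recurses on the shorter tree; both organizations yield the same bound.
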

\begin{proof}
    We perform induction on the height $\ell\ge 0$ of $T$. The statement is trivially true for $\ell=0$. Suppose then that $\ell\geq 1$ and we have already shown the statement for out-arborescences of height up to $\ell-1$. 
    
    Let $L\subseteq V(T)$ denote the set of vertices at distance $\ell-1$ from the root $r$ which are not leaves of $T$. Note that $L\neq \emptyset$, for otherwise $T$ would have height less than $\ell$. By the pigeon-hole principle, each vertex $u\in L$ has at least $d^+_{T}(u)/C$ children in some color $c$. For every $u\in L$, let us pick any such color $c$ and let us color $u$ with $c$. 
    
    Let $F\subseteq T$ be the out-arborescence of height $\ell-1$ obtained by removing all leaves of $T$ at height~$\ell$. Note that the leaf-set of $F$ is given by the union of $L$ as well as the set of leaves of $T$ at height at most $\ell-1$. Note that then, by our above color-assignment to the vertices in $L$, each of the leaves of $F$ is colored with one of $C$ colors. Therefore, we can apply induction to $F$ equipped with this coloring, obtaining a nonempty out-arborescence $F'\subseteq F$ with root $r$, such that the leaves of $F'$ form a monochromatic subset of the leaves of $F$, say, they all have color $c$, and such that $d^+_{F'}(u)\geq d^{+}_F(u)/C$ for every $u\in V(F')$. Finally, for each $u\in L\cap V(F')$, by the above definition of the colors assigned to $L$, we have that there exist at least $d^+_{T}(u)/C$ children of $u$ in $T$ which were assigned color $c$ by the original coloring of the leaves of $T$. By adding the edges from $u$ to these children of color $c$ back to $F'$, for every $u\in L\cap V(F')$, we now obtain an out-arborescence $T'\subseteq T$ of height $\ell$ such that all its leaves are $c$-colored leaves of $T$. Furthermore, we have $d_{T'}^+(u)\ge d_T^+(u)/C$ for every $u\in V(T')$ by definition of $T'$, showing that $T$ satisfies the induction hypothesis. This concludes the proof.
\end{proof}
Lemma~\ref{lem:monochromatic} directly implies the following.
\begin{corollary}\label{cor: colored trees}
    Let $T$ be a $(k,d)$-broom rooted at $v$ and suppose the leaves of $T$ are colored with $C$ colors. Then, there exists a $(k,\lceil d/C\rceil)$-broom $T'\subseteq T$ rooted at $v$ such that the leaves of $T'$ are a monochromatic subset of the leaves of $T$.
\end{corollary}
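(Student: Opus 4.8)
The plan is to apply Lemma~\ref{lem:monochromatic} to the out-arborescence underlying $T$ together with the given $C$-colouring of its leaves, and then to verify that the resulting sub-arborescence can be pruned to a $(k,\lceil d/C\rceil)$-broom rooted at $v$. Concretely, Lemma~\ref{lem:monochromatic} provides $T'\subseteq T$ rooted at $v$ whose leaf set is a monochromatic subset of the leaves of $T$, and with $d_{T'}^+(u)\ge d_T^+(u)/C$ — hence, as $d_{T'}^+(u)$ is an integer, $d_{T'}^+(u)\ge\lceil d_T^+(u)/C\rceil$ — for every $u\in V(T')$. A first useful observation: since a leaf of $T$ has no out-arcs in $T$ and hence none in $T'\subseteq T$, every non-leaf of $T'$ is a non-leaf of $T$; in particular every non-leaf of $T'$ that is not a subdivision vertex of $T$ has $d_T^+(\cdot)=d$ and therefore out-degree at least $\lceil d/C\rceil$ in $T'$.

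I then split into the two cases of the broom definition. If $\ell\le k$, then $T$ is balanced of height $\ell$ with all non-leaves of out-degree $d$, so the leaves of $T'$, being leaves of $T$, all lie at distance $\ell$ from $v$ in $T$ and hence in $T'$; thus $T'$ is balanced of height $\ell$ with every non-leaf of out-degree at least $\lceil d/C\rceil$, and pruning each non-leaf down to out-degree exactly $\lceil d/C\rceil$ (top-down, which preserves balancedness) yields the desired $(k,\lceil d/C\rceil)$-broom, with leaves among the monochromatic leaves of $T'$. If $\ell=k+1$, write $T$ as the result of subdividing the out-arcs of $v$ in a balanced height-$(k+1)$ out-arborescence $\hat T$ with all non-leaves of out-degree $d$, and let $c_1,\dots,c_d$ be the children of $v$ in $\hat T$. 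The key point is that each subdivision vertex has out-degree $1$ in $T$, so it is not a leaf of $T$ and hence not a leaf of $T'$; consequently, whenever a child of $v$ in $T'$ lies on the chain from $v$ to some $c_i$, the entire chain down to $c_i$ lies in $T'$, $c_i\in V(T')$, and (as $c_i$ is internal in $T$, for $k\ge1$) $c_i$ is internal in $T'$ with out-degree at least $\lceil d/C\rceil$ there. Since $d_{T'}^+(v)\ge\lceil d/C\rceil$ and distinct out-arcs of $v$ lead to distinct $c_i$'s, I can select $\lceil d/C\rceil$ children of $v$ in $T'$ whose chains reach distinct vertices $c_{i_1},\dots,c_{i_{\lceil d/C\rceil}}$; below each $c_{i_j}$ the arborescence $T'$ restricts to a sub-arborescence of the balanced height-$k$ subtree of $\hat T$ rooted there, whose leaves (being leaves of $T$) are at distance $k$ from $c_{i_j}$, so it is balanced of height $k$ with every non-leaf of out-degree at least $\lceil d/C\rceil$. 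Pruning these subtrees to out-degree exactly $\lceil d/C\rceil$ and taking their union with the $\lceil d/C\rceil$ chains from $v$ gives a tree which, after contracting the chains, is balanced of height $k+1$ with all non-leaves of out-degree $\lceil d/C\rceil$; that is, it is a $(k,\lceil d/C\rceil)$-broom rooted at $v$ with leaves among the monochromatic leaves of $T'$.

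I do not anticipate a serious obstacle: the statement is essentially a repackaging of Lemma~\ref{lem:monochromatic}. The only point that needs genuine care is the $\ell=k+1$ case, where one must check that passing to a monochromatic sub-arborescence does not destroy the broom structure — specifically, that the ``handle'' chains leaving the root survive intact. This is exactly what the observation that subdivision vertices can never become leaves of $T'$ guarantees, and once that is in place the remainder is routine pruning (with the degenerate case $k=0$, where $T$ is just a spider, handled even more directly).
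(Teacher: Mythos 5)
Your proposal is correct and follows the paper's intended route: the paper derives this corollary directly from Lemma~\ref{lem:monochromatic}, and you do exactly that, with the (routine but worthwhile) verification that the monochromatic sub-arborescence can be pruned back to a broom in both cases $\ell\le k$ and $\ell=k+1$, the key point being that subdivision vertices retain out-degree $1$ so the handle chains survive. No gaps.
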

Using Corollary~\ref{cor: colored trees}, we can now prove the second key preparatory statement for Theorem~\ref{thm:main}, that allows us to pass from any given $(k,d)$-broom digraph to another broom subdigraph of slightly lower degree, but which is typed.
\begin{lemma}\label{lem: get typed}
    Let $k\in \mathbb{N}$, $t\in \{0,\ldots,k\}$ and let $D$ be a $(k,d)$-broom digraph with root set $R$. Then there exists a $t$-typed $(k,\lceil d/2^{t(t-1)/2}\rceil)$-broom digraph $D'\subseteq D$ with root set $R$.
\end{lemma}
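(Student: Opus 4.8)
The plan is to prove the statement by induction on $t$. The base case $t=0$ is immediate, since every $(k,d)$-broom digraph is vacuously $0$-typed and $2^{t(t-1)/2}=1$ when $t=0$. For the inductive step, assume $1\le t\le k$ and that the statement holds for $t-1$. Given a $(k,d)$-broom digraph $D$ with root set $R$, I would first apply the inductive hypothesis to obtain a $(t-1)$-typed $(k,d_1)$-broom digraph $D_1\subseteq D$ with root set $R$, where $d_1=\lceil d/2^{(t-1)(t-2)/2}\rceil$. It then suffices to pass from $D_1$ to a $t$-typed $(k,\lceil d_1/2^{t-1}\rceil)$-broom digraph $D_2\subseteq D_1$ with root set $R$: since $(t-1)(t-2)/2+(t-1)=t(t-1)/2$ and $\lceil\lceil d/a\rceil/b\rceil\ge\lceil d/(ab)\rceil$, this yields the claimed degree $\lceil d/2^{t(t-1)/2}\rceil$.

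To construct $D_2$, for each broom $T_r$ in the decomposition of $D_1$ I would colour every leaf of $T_r$ by its $(t-1)$-type in $D_1$ (well defined and unique, since $\delta^+(D_1)\ge 1$ forces every walk length up to $t-1$ to be realised); this is a colouring with at most $2^{t-1}$ colours, so applying Corollary~\ref{cor: colored trees} replaces $T_r$ by a $(k,\lceil d_1/2^{t-1}\rceil)$-sub-broom $T_r'\subseteq T_r$ rooted at $r$ all of whose leaves share a common $(t-1)$-type. Set $D_2:=\bigcup_{r\in R}T_r'$. Straight from the definitions, $D_2$ is a $(k,\lceil d_1/2^{t-1}\rceil)$-broom digraph with root set $R$ and $\delta^+(D_2)\ge 1$; and since walks in $D_2$ are walks in $D_1$, every vertex of $D_2$ retains the $(t-1)$-type it had in $D_1$, so $D_2$ is still $(t-1)$-typed.

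It remains to show that $D_2$ is actually $t$-typed, i.e. that for every $v\in V(D_2)$ the set of endpoints of length-$t$ walks from $v$ lies entirely in $R$ or entirely outside it. Writing such a walk as a first arc $v\to w$ followed by a length-$(t-1)$ walk from $w$ and invoking $(t-1)$-typedness of $D_2$, this reduces to showing that all out-neighbours $w$ of $v$ agree on the last coordinate of their $(t-1)$-type; I would prove the stronger claim that all out-neighbours of $v$ — equivalently, all children of $v$ in the unique broom $T_s'$ in which $v$ has positive out-degree — carry the same $(t-1)$-type. If these children are leaves of $T_s'$, they carry the common leaf-type of $T_s'$ by construction. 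If they are internal vertices of $T_s'$, they are siblings in the balanced out-arborescence underlying $T_s'$, hence all at a common distance $m\ge 1$ from the leaves of $T_s'$, except that the children of the root $s$ in the subdivided ($\ell=k+1$) case instead all lie at distance at least $k>t-1$ from the leaves; in either case one computes that the $(t-1)$-type of such an internal vertex is determined solely by $m$ (or by the fact that its distance to the leaves exceeds $t-1$) together with the common leaf-type of $T_s'$ — a walk of length $j<m$ stays inside $T_s'$ and ends at an internal, hence non-$R$, vertex; a walk of length $m$ ends at a leaf of $T_s'$; and a walk of length $j$ with $m<j\le t-1$ passes through a leaf of $T_s'$ after exactly $m$ steps and then realises a length-$(j-m)$ walk whose endpoint's membership in $R$ is dictated by the common leaf-type. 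The hypothesis $t\le k$ is exactly what makes the exceptional subdivided subcase (and the path vertices hanging below the root) behave trivially. I expect this final case analysis — isolating which structural parameter of a broom governs the $(t-1)$-type of an internal vertex, and handling the subdivided case — to be the main obstacle; the remaining verifications are routine unwindings of the definitions.
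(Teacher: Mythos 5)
Your proposal is correct and follows essentially the same route as the paper: induction on $t$, colouring the leaves of each broom by their $(t-1)$-type, applying Corollary~\ref{cor: colored trees} to make each broom's leaf-set monochromatic, and then reading off the $t$-type of every vertex from its distance to the leaves of its broom together with the common leaf-type (with the subdivided-root case handled by $t\le k$). Your final verification is just a more explicit spelling-out of the paper's one-paragraph argument, and the degree bookkeeping matches.
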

\begin{proof}
    We prove the statement by induction on $t$. Note that any $(k,d)$-broom digraph is $0$-typed. Suppose then that $t\geq 1$ and we have already found a $(t-1)$-typed $(k,\lceil d/2^{(t-1)(t-2)/2}\rceil)$-broom digraph $D_{t-1}\subseteq D$ with root set $R$, and consisting of the union of the $(k,\lceil d/2^{(t-1)(t-2)/2}\rceil)$-brooms $(T_r)_{r\in R}$. Note that there are at most $2^{t-1}$ different $(t-1)$-types. Now, for each $r\in R$, interpret the $(t-1)$-types of the leaves of $T_r$ as a coloring with $C:=2^{t-1}$ colors. We can then apply Corollary~\ref{cor: colored trees} to $T_r$ equipped with this coloring, showing that there exists a $(k,\lceil (d/2^{(t-1)(t-2)/2})/C\rceil)=(k,\lceil (d/2^{t(t-1)/2})\rceil)$-broom $T_r'\subseteq T_r$ rooted at $r$ such that the leaves of $T_r'$ are leaves of $T_r$, all of the same $(t-1)$-type, that we denote by $\mathbf{a}_r\in \{0,1\}^{t-1}$ (for each $r\in R$). Let $D_t\subseteq D_{t-1}$ be the $(k,\lceil d/2^{t(t-1)/2}\rceil)$-broom digraph with root set $R$ defined as the union $\bigcup_{r\in R}{T_r'}$.
    
    Let $u$ be any vertex in $V(D)$, and let $r\in R$ be such that $u\in V(T_r')\setminus (R\setminus \{r\})$. Then $u$ is a non-leaf vertex of $T_r'$. Since $T_r'$ is a $(k,\lceil d/2^{t(t-1)/2}\rceil)$-broom, it follows that either there exists $\ell\ge 1$ such that every directed walk starting in $u$ of length $\ell$ ends in a leaf of $T_r'$, or every directed walk starting in $u$ of length at most $k$ does not contain any vertex in $R$. In either case, the $t$-type of $u$ is completely determined by $\mathbf{a}_r$, the $(t-1)$-type of all the leaves of $T_r'$. Since $v$ was arbitrary, this implies that $D_{t}\subseteq D$ is $t$-typed, establishing the induction hypothesis and concluding the proof.
\end{proof}

To conclude this preparatory sections, we would like to concentrate the two main results of this section (namely Lemma~\ref{lem: lovasz trick} and Lemma~\ref{lem: get typed}) into one main statement,  which can then directly be employed in the proof of Theorem~\ref{thm:main} in the next section. 
\begin{lemma}\label{Clean-up}
    Let $D$ be a $(k,d)$-broom digraph with root set $R$ such that $d\ge 10^{13k^3}$. Then there exists a $k$-typed $(k,\lceil d^{1/(8k)}\rceil)$-broom digraph $D'\subseteq D$ with root set $R'\subseteq R$ such that $d_D^-(r)\geq d^{1/10}$ for all $r\in R'$.
\end{lemma}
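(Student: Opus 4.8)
The plan is to chain the two main results of this section. First, the hypothesis $d\ge 10^{13k^3}$ certainly implies $d\ge\max\{(4k)^6,5\cdot 10^{12}\}$, so Lemma~\ref{lem: lovasz trick} applies and yields a $(k,\lfloor d^{1/(7k)}\rfloor)$-broom digraph $D_1\subseteq D$ with a root set $R_1\subseteq R$ such that $d_D^-(r)\ge d^{1/10}$ for every $r\in R_1$. Writing $d_1:=\lfloor d^{1/(7k)}\rfloor$, I would then apply Lemma~\ref{lem: get typed} to the $(k,d_1)$-broom digraph $D_1$ with the admissible choice $t:=k$. This produces a $k$-typed $(k,\lceil d_1/2^{k(k-1)/2}\rceil)$-broom digraph $D_2\subseteq D_1$ whose root set is again $R_1$.

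Since out-degrees in a broom digraph are prescribed exactly, to land on the target out-degree $m_0:=\lceil d^{1/(8k)}\rceil$ I would finally prune $D_2$, which is legitimate as soon as $\lceil d_1/2^{k(k-1)/2}\rceil\ge m_0$: deleting out-arcs inside each constituent broom of $D_2$ so as to bring all its internal out-degrees down to exactly $m_0$ (exactly the kind of pruning carried out in the proofs of Lemmas~\ref{lem: lovasz trick} and~\ref{lem: get broom from degrees}) yields a $(k,m_0)$-broom digraph $D'\subseteq D_2$ with root set $R':=R_1$, where each constituent broom of $D'$ has its leaf set contained in that of the corresponding broom of $D_2$. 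Deleting arcs only removes directed walks, so every vertex retains the $k$-type it had in $D_2$ and $D'$ is again $k$-typed; moreover the root set $R'\subseteq R$ is unchanged, and since in-degrees are measured in the fixed digraph $D$ we still have $d_D^-(r)\ge d^{1/10}$ for all $r\in R'$. This gives the conclusion.

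It therefore only remains to verify $\lceil d_1/2^{k(k-1)/2}\rceil\ge \lceil d^{1/(8k)}\rceil$. Using the crude bounds $\lfloor d^{1/(7k)}\rfloor\ge\tfrac12 d^{1/(7k)}$ and $\lceil d^{1/(8k)}\rceil\le 2d^{1/(8k)}$, it suffices that $d^{1/(7k)-1/(8k)}=d^{1/(56k)}\ge 2^{2+k(k-1)/2}$, equivalently $\log_2 d\ge 56k\bigl(2+\tfrac{k(k-1)}{2}\bigr)$. For $k\ge 2$ this is a routine consequence of $\log_2 d\ge 13k^3\log_2 10>43k^3$ (one checks that $43k^3\ge 56k(2+\tfrac{k(k-1)}{2})$ for all $k\ge 2$), while for $k=1$, where the factor $2^{k(k-1)/2}$ is trivial, it follows at once from the fact that $d^{1/7}-d^{1/8}\ge 2$ for every $d\ge 10^{13}$.

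There is no genuine obstacle here beyond this bookkeeping: the whole point of the lemma is that the exponent $1/(8k)$ in its statement was chosen small enough to absorb both the $d\mapsto d^{1/(7k)}$ loss from the Lov\'asz-local-lemma step (Lemma~\ref{lem: lovasz trick}) and the additional factor $2^{-k(k-1)/2}$ lost by iterating the typing argument $k$ times (Lemma~\ref{lem: get typed}), while the hypothesis $d\ge 10^{13k^3}$ remains strong enough to feed Lemma~\ref{lem: lovasz trick}. The only things needing a little care are the floor/ceiling rounding, the (entirely routine) pruning step, and the degenerate case $k=1$.
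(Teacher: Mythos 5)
Your proposal is correct and follows essentially the same route as the paper: apply Lemma~\ref{lem: lovasz trick}, then Lemma~\ref{lem: get typed} with $t=k$, check that $\lfloor d^{1/(7k)}\rfloor/2^{k(k-1)/2}\ge d^{1/(8k)}$ under the hypothesis on $d$, and prune down to out-degree $\lceil d^{1/(8k)}\rceil$ (noting that pruning preserves typedness and the root set). The only difference is that you spell out the rounding arithmetic and the pruning/typedness preservation, which the paper leaves implicit.
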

\begin{proof}
    By Lemma~\ref{lem: lovasz trick} there exists a $(k,\lfloor d^{1/(7k)}\rfloor)$-broom digraph $H\subseteq D$ with root set $R'\subseteq R$ such that $d_D^-(r)\geq d^{1/10}$ for all $r\in R'$. Next, we can then apply Lemma~\ref{lem: get typed} to $H$ and obtain a $k$-typed $(k,\lceil \lfloor d^{1/(7k)}\rfloor/2^{k(k-1)/2}\rceil)$-broom digraph $H'\subseteq H$ also with root-set $R'$. Using our assumption on $d$ we find that $\lfloor d^{1/(7k)}\rfloor/2^{k(k-1)/2}\geq d^{1/(8k)}$. Thus, we can further prune $H'$ to obtain a $k$-typed $(k,\lceil d^{1/(8k)}\rceil)$-broom digraph $D'\subseteq D$ with root set $R'$. 
\end{proof}
\section{Embedding the tree}\label{sec:proof}
In this section, we use our preparations from the previous section to finally prove Theorem~\ref{thm:main}. We start by setting up more notation and terminology. In the following, given any grounded tree $T$, we will use the notation $G(T):=\{x\in V(T)|d_T^-(x)\ge 2\}$ for the set of vertices of in-degree at least two and $Z(T):=\{x\in V(T)|d_T^-(x)=0\}$ for the set of sources. A useful notion we will consider in the following is that of \emph{max-grounded trees}. We say that a grounded tree $T$ is \emph{max-grounded} if for some (or, equivalently, every) height function $h:V(T)\rightarrow \mathbb{Z}$ for $T$ we have that $h(x)\ge h(y)$ for every $x\in G(T)$ and $y\in V(T)$, in other words, if the vertices in $G(T)$ are of maximum $h$-value. The reason for considering max-grounded trees is twofold: First, as we shall see next (Observation~\ref{obs:obvious} below), to prove Theorem~\ref{thm:main} it is sufficient to prove it for all max-grounded trees. Second, knowing that the tree at consideration is max-grounded comes with some convenient properties which will enable us to embed it into any $(k,d)$-broom digraph for sufficiently large $d$ and hence into all digraphs of sufficiently large minimum out-degree (Lemma~\ref{lem:keyfortheorem} below). The combination of these two insights will then prove Theorem~\ref{thm:main}.

\begin{observation}\label{obs:obvious}
If every max-grounded tree is $\delta^+$-enforcible, then so is every grounded tree, i.e., Theorem~\ref{thm:main} reduces to proving it for all max-grounded trees.
\end{observation}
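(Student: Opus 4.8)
The plan is to reduce the embedding of an arbitrary grounded tree $T$ to the embedding of a single carefully chosen max-grounded tree --- the \emph{top part} of $T$ --- and then to attach the \emph{bottom part} of $T$ by a completely routine greedy argument that uses nothing but the minimum out-degree.

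I would first dispose of the degenerate case $G(T)=\emptyset$: then every vertex of $T$ has in-degree at most $1$, so $T$ is an out-arborescence, which is vacuously max-grounded, and the claim is immediate. So assume $G(T)\neq\emptyset$, fix a height function $h$ of $T$, and let $g$ be the common $h$-value of all vertices of $G(T)$ (which exists since $T$ is grounded). Let $T^\ast$ be the subgraph of $T$ induced on $\{v\in V(T):h(v)\le g\}$. The main structural claim is that $T^\ast$ is a connected tree which is max-grounded, with $G(T^\ast)=G(T)$ sitting at its maximum height $g$. Connectivity is the one point that needs a small argument: on the path of $T$ between any two vertices of height $\le g$, a vertex of locally maximal height would receive two arcs, hence lie in $G(T)$ and have height exactly $g$, so the path can never rise above level $g$. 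Since $T^\ast$ contains every vertex of $G(T)$ together with all its in-arcs (whose tails lie at height $g-1\le g$), we get $G(T^\ast)=G(T)$; and as $T^\ast$ has no vertex above level $g$, its maximum height is $g$. Hence $T^\ast$ is a max-grounded tree, and by hypothesis it is $\delta^+$-enforcible; set $d_0:=d(T^\ast)$.

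Next I would handle the vertices of $T$ strictly below level $g$. Every such vertex $x$ has $d_T^-(x)\le 1$ (since $d_T^-(x)\ge 2$ forces $h(x)=g$), and in fact $d_T^-(x)=1$: if $x$ were a source, then $x$ together with all its descendants --- all of height $>g$ --- would form a connected component of $T$ disjoint from $G(T)$, contradicting $G(T)\neq\emptyset$ and connectivity of $T$. Ordering $V(T)\setminus V(T^\ast)$ by non-decreasing height as $x_1,\dots,x_m$, the unique in-neighbour of each $x_i$ therefore lies in $V(T^\ast)\cup\{x_1,\dots,x_{i-1}\}$. Now put $d(T):=\max\{d_0,|V(T)|\}$ and take any digraph $D$ with $\delta^+(D)\ge d(T)$. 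Since $\delta^+(D)\ge d_0$, $D$ contains a copy $\phi_0$ of $T^\ast$; I would extend $\phi_0$ to a copy of $T$ by processing $x_1,\dots,x_m$ in order, at step $i$ choosing $\phi(x_i)$ to be any out-neighbour of the (already embedded) image of the in-neighbour of $x_i$ that has not yet been used --- possible because that image has at least $\delta^+(D)\ge|V(T)|$ out-neighbours while strictly fewer than $|V(T)|$ vertices have been used so far. This produces a copy of $T$ in $D$, so $T$ is $\delta^+$-enforcible.

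I expect the only genuinely non-trivial step to be the structural claim that the truncation $T^\ast$ is connected and max-grounded, since this is what licenses the application of the hypothesis; both sub-points needed for it --- that $T$ has no local height-maximum above level $g$, and that $T$ has no source below level $g$ --- are short consequences of the single fact that every local height-maximum of $T$ lies in $G(T)$ and hence at height exactly $g$. The downward extension is entirely standard, being the usual greedy embedding of an out-forest into a digraph of large minimum out-degree.
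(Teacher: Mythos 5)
Your proposal is correct. It rests on the same underlying idea as the paper's argument --- the vertices of $T$ with $h$-value strictly beyond the common level $g$ of $G(T)$ all have in-degree exactly one and can be attached greedily using only the minimum out-degree --- but the execution is genuinely different in organization. The paper runs a minimal-counterexample induction: it removes a single vertex $v$ of maximum height, observing only that such a vertex is a sink of in-degree $1$, invokes minimality for $T-v$, and re-attaches $v$ by one greedy step; no global structural analysis of the truncated tree is ever needed. You instead cut at level $g$ in one shot, which obliges you to verify that $T^{\ast}=T[\{x:h(x)\le g\}]$ is a connected max-grounded tree with $G(T^{\ast})=G(T)$, and that no vertex with $h(x)>g$ is a source (so that the leftover vertices form an out-forest hanging off $T^{\ast}$), before doing a single greedy pass in height order. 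Your justifications of these points are sound: an interior vertex of maximal height on a path between two vertices of level at most $g$ receives both incident path-arcs, hence lies in $G(T)$ and has height exactly $g$, so the truncation is connected; and a source at level beyond $g$ would have its set of descendants closed under all arcs (any extra in-arc into a descendant would create a vertex of $G(T)$ at height greater than $g$), contradicting connectivity of $T$ together with $G(T)\neq\emptyset$. Both routes give the same final bound $d(T)=\max\{d(T^{\ast}),|V(T)|\}$ once the paper's recursion is unfolded; yours is explicit and non-inductive at the cost of the connectivity and no-source claims, while the paper's leaf-by-leaf peeling sidesteps those claims entirely and is correspondingly shorter.
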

\begin{proof}
    Suppose towards a contradiction that every max-grounded tree is $\delta^+$-enforcible, and yet there exists some grounded tree which is not $\delta^+$-enforcible. Let $T$ be a grounded tree of minimum order which is not $\delta^+$-enforcible. Let $h$ be a height function for $T$. Let $v$ be a vertex in $T$ maximizing $h(v)$. Since $T$ is not max-grounded, we must have $h(v)>h(x)$ for some (and hence, every) $x\in G(T)$. Note that, by definition of height functions and by choice of $v$, we must have $d_T^+(v)=0$. Since also $v\notin G(T)$, we must have $d_T^-(v)=1$, that is, $v$ is a leaf of $T$ of indegree $1$. By minimality of the order of $T$, we know that the grounded tree $T-v$ must be $\delta^+$-enforcible. Hence, there exists some constant $d(T-v)\in \mathbb{N}$ such that all digraphs of minimum out-degree at least $d(T-v)$ contain a copy of $T-v$ as a subdigraph. But then it is easy to see that every digraph of minimum out-degree at least $d(T):=\max\{d(T-v),|V(T)|\}$ contains a copy of $T$: We take a copy of $T-v$ contained in the digraph and augment it with an additional out-edge from the vertex representing the unique neighbor of $v$ in $T$ to a copy of $T$. Hence, $T$ is $\delta^+$-enforcible, a contradiction to our initial assumption on $T$. This concludes the proof. 
\end{proof}
To state the next lemma, which is the final step of the proof of Theorem~\ref{thm:main}, we show that all max-grounded trees can be embedded into $(k,d)$-brooms in a particular way. To make this precise, we use the following definition: Let $D$ be a $(k,d)$-broom digraph with root set $R$, and let $T$ be a max-grounded tree with height function $h:V(T)\rightarrow \mathbb{Z}$ such that $\max_{x\in V(T)}{h(x)}=0$. For each vertex $u\in V(D)$, let us denote by $P_D(u)$ the unique shortest path in $D$ from $R$ to $u$ (in other words, this is the root-to-leaf path of the unique $(k,d)$-broom in $D$ containing $u$ as a root or internal vertex). With this setup, we may finally define a \emph{proper copy of $T$ in $D$} as any copy $F\subseteq D$ of $T$ such that:
\begin{itemize}
    \item every vertex $x\in V(T)$ with $h(x)=0$ corresponds to a vertex of $F$ that is contained in the root set $R$ of $D$, and
    \item The paths $(P_D(x))_{x\in Z(F)}$ are pairwise vertex-disjoint and disjoint from $V(F)\setminus Z(F)$. 
\end{itemize}
Having put this important notion in place, we can now state the lemma.
\begin{lemma}\label{lem:keyfortheorem}
Let $T$ be a max-grounded tree, and let $k\ge |V(T)|$ and $d\ge 10^{13k^3\cdot (8k)^{|V(T)|}}$ be integers. Then every $(k,d)$-broom digraph contains a proper copy of $T$.
\end{lemma}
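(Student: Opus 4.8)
The plan is to prove the statement by induction on $n:=|V(T)|$. The base case $n=1$ is immediate: the unique vertex of $T$ has height $0$, so it must be placed in the (non-empty) root set $R$, and the condition on the paths $P_D(\cdot)$ is vacuous. For the inductive step I would first pass to a cleaner host digraph: applying Lemma~\ref{Clean-up} to $D$ produces a $k$-typed $(k,d')$-broom digraph $D'\subseteq D$ with root set $R'\subseteq R$, where $d':=\lceil d^{1/(8k)}\rceil$ and $d_D^-(r)\ge d^{1/10}$ for every $r\in R'$. Since $d\ge 10^{13k^3(8k)^n}$ we get $d'\ge 10^{13k^3(8k)^{n-1}}$, so $D'$ still satisfies the hypothesis of the lemma for trees on $n-1$ vertices (and $k\ge n\ge |V(T)|-1$ is maintained); the factor $8k$ lost per step is exactly what the exponent $(8k)^{|V(T)|}$ pays for. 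One auxiliary observation worth recording up front is that a proper copy of a tree in $D'$ is automatically a proper copy in $D$: any vertex $u\in V(D')\setminus R$ is an internal vertex of a unique broom in both $D$ and $D'$, hence has in-degree exactly $1$ in both, and following in-neighbours shows that $P_D(u)$ is a terminal segment of $P_{D'}(u)$ (while $P_D(u)=\{u\}$ for $u\in R$); thus pairwise disjointness of the paths $P_{D'}(\cdot)$ and their disjointness from a copy transfer verbatim to the paths $P_D(\cdot)$. So it suffices to produce a proper copy in $D'$.

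The second move is to delete one vertex from $T$. Every leaf of the underlying tree of $T$ is either a source of out-degree $1$ or a sink of in-degree $1$, and in particular is never in $G(T)$, so deleting such a leaf $v$ yields a tree $T^-$ that is again max-grounded (since $G(T^-)\subseteq G(T)$ and restricting a height function of $T$ still witnesses this). I would apply the induction hypothesis to $T^-$ and $D'$, obtaining a proper copy $F^-$ of $T^-$, and then extend $F^-$ across $v$. If $v$ is a source of out-degree $1$ with child $w$, then $w$ is a source of $T^-$, its image $w^*$ lies in $Z(F^-)$, and one checks that $Z(F^-)\cap V(P_{D'}(w^*))=\{w^*\}$, so the immediate predecessor $q$ of $w^*$ on $P_{D'}(w^*)$ is a vertex outside $V(F^-)$; mapping $v$ to $q$ gives a proper copy of $T$, the new source-path $P_{D'}(q)$ being a sub-path of $P_{D'}(w^*)$. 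In the degenerate case $w^*\in R'$, where $P_{D'}(w^*)$ is trivial, one instead takes any in-neighbour of $w^*$ in $D$ not lying in $V(F^-)$ nor on any of the paths $P_D(x)$, $x\in Z(F^-)$; this is possible because $d_D^-(w^*)\ge d^{1/10}$ while each $P_D(x)$ meets $N_D^-(w^*)$ in at most two vertices, so only $O(k)$ vertices are forbidden.

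If instead $v$ is a sink of in-degree $1$ with parent $p$, whose image is $p^*$, one must re-attach $v$ as a fresh out-neighbour $q$ of $p^*$ in $D$, avoiding $V(F^-)\cup\bigcup_{x\in Z(F^-)}V(P_D(x))$, and with $q\in R$ whenever $v$ has height $0$. Here the typedness of $D'$ is the decisive tool. Writing $h$ for the (normalised) height function of $T^-$, so that $h(p)<0$, I would argue that the $k$-type $\mathbf a$ of $p^*$ has $\mathbf a_{|h(p)|}=1$: one uses the structural fact that in a connected max-grounded tree every source reaches a vertex of height $0$ (a source has all of the tree in its out-reach), so the source $z_0$ at the top of the ancestor-path of $p$ in $T^-$ admits a directed path of length $\delta:=|h(z_0)|\le k$ to a height-$0$ vertex; by typedness every length-$\delta$ walk from $z_0^*$ ends in $R'$, and prepending the image of the path from $z_0$ to $p$ (of length $\delta-|h(p)|$) to an arbitrary length-$|h(p)|$ walk from $p^*$ shows every such walk ends in $R'$. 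Since walks of all lengths exist, $p^*$ has a path of length $\le k$ into $R'$, so $d^+_{D'}(p^*)=d'$ by Lemma~\ref{lem: high degree}; moreover when $|h(p)|=1$, which is exactly when $v$ has height $0$, all $d'$ out-neighbours of $p^*$ lie in $R'$, and when $|h(p)|\ge 2$ the vertex $q$ may lie anywhere and one again has $d'$ candidates. Either way this dwarfs the at most $O(k)$ forbidden vertices, so $q$ exists; and since $Z(F)=Z(F^-)$ and only the single vertex $q$ (kept off all the paths) was added, the extended $F$ is a proper copy of $T$.

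The one point requiring genuine care — the main obstacle — is that the above analysis of $p^*$ breaks down precisely when deleting $v$ would force $p$ to become the top of $T^-$, i.e.\ when $v$ is the \emph{unique} height-$0$ vertex of $T$: then $p^*\in R'$ would be a root, and a root of $D'$ need not have any out-neighbour lying in $R$ at all. The remedy is to refuse to delete such a $v$. When $T$ has a unique height-$0$ vertex, any other leaf of the underlying tree (and there is one, as $|V(T)|\ge 2$) necessarily has height $<0$, and deleting it instead keeps a height-$0$ vertex in $T^-$, so $h(p)\le -2$ and the argument of the previous paragraph applies. With this single caveat the case analysis closes; it then remains only to verify the (routine) bookkeeping that at each step all height-$0$ images stay in $R$ and all source-paths remain pairwise disjoint and disjoint from the rest of the copy, which completes the induction and hence the proof of the lemma.
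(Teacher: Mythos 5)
Your overall strategy is the same as the paper's: induct on $\lvert V(T)\rvert$, pass to a $k$-typed sub-broom digraph with high in-degree roots via Lemma~\ref{Clean-up}, embed $T$ minus a leaf properly by induction, and extend by one vertex via the same trichotomy (predecessor on $P_{D'}(\cdot)$ when the attachment vertex is not a root; the in-degree bound $d_D^-(r)\ge d^{1/10}$ when it is; typedness plus Lemma~\ref{lem: high degree} for a sink leaf). The one genuinely different ingredient is your Case-3 setup: the paper deletes a leaf of \emph{minimum} height and runs the typedness argument along a shortest undirected path to a height-$0$ vertex split at a common ancestor $s$, whereas you delete an almost arbitrary leaf (avoiding a unique height-$0$ vertex) and argue via the ancestor path of $p$ up to a source $z_0$ of $T^-$ together with the fact that in a max-grounded tree every source reaches a height-$0$ vertex. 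That fact is true, but your justification for it (``a source has all of the tree in its out-reach'') is false for oriented trees; the correct argument is that if the out-reach $S_0$ of the source $z_0$ is not all of $T^-$, then some tree edge enters $S_0$ at a vertex $t\neq z_0$, which then has in-degree at least $2$, hence lies in $G(T^-)$ and so has height $0$ by max-groundedness. Max-groundedness is essential here (the claim fails for general oriented trees), so as written this key step rests on a wrong reason, even though the statement you need is provable in a few lines.

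The second point that is not mere routine bookkeeping is the subcase $w^*\in R'$ (a source leaf attaching to a root image). You only require the new vertex $q\in N_D^-(w^*)$ to avoid $V(F^-)$ and the existing paths, but since $q$ becomes a source of the new copy, properness demands that the \emph{entire} path $P_D(q)$ be disjoint from $V(F)\setminus Z(F)$ and from the other paths $P_D(x)$, $x\in Z(F)$; choosing $q$ off these sets does not control $P_D(q)$, which can be long (brooms may be subdivided). The fix is the paper's: $w^*$ has at most one in-neighbour in each broom of $D$, each vertex of $F^-$ blocks at most one broom, and $d_D^-(w^*)\ge d^{1/10}\ge k>\lvert V(F^-)\rvert$, so one can pick $q$ whose whole broom (hence $P_D(q)$) avoids everything relevant. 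Relatedly, your unconditional claim that the child $w$ is a source of $T^-$ fails when $d_T^-(w)\ge 2$; this is harmless only because then $h(w)=0$ forces $w^*\in R'$, i.e.\ the degenerate subcase, and in the other subcase $\iota(w)\notin R'$ does imply $h(w)<0$ and hence $w\in Z(T^-)$ --- this implication should be stated, as it is exactly what makes $P_{D'}(w^*)$ one of the protected source paths. With these two repairs (and the analogous per-path count of forbidden out-neighbours of $p^*$ in the sink case, which is the same argument as in the paper), your proof closes and is a valid variant of the paper's.
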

\begin{proof}
Towards a contradiction suppose the statement of the lemma is false, and let $T$ be a max-grounded tree of minimum order for which it fails. It follows that there exist integers $k\ge |V(T)|$ and $d\ge 10^{13k^3\cdot (8k)^{|V(T)|}}$ as well as a $(k,d)$-broom digraph $D$ such that $D$ contains no proper copy of $T$. Then, since every $(k,d)$-broom digraph trivially contains a proper copy of the single-vertex tree, we must have $|V(T)|\ge 2$. Let $h:V(T)\rightarrow \mathbb{Z}$ be the unique height function of $T$ such that $\max_{x\in V(T)}{h(x)}=0$. Then, by definition of a max-grounded tree, we have $h(x)=0$ for every $x\in G(T)$. Pick a leaf $l$ of $T$ minimizing $h(l)$. The choice of $l$ and $|V(T)|\ge 2$ guarantee that the set $B:=\{x\in V(T)\setminus \{l\}|h(x)=0\}$ is non-empty. Let $P$ be a shortest (not necessarily directed) path in $T$ from $l$ to a vertex in $B$, and let $v\in B$ denote the other endpoint of $P$. Note that by choice of $P$, every vertex $x\in V(P)\setminus \{v,l\}$ satisfies $h(x)<0$ and thus $x\notin G(T)$, meaning $d_T^{-}(x)\le 1$. The latter implies that there exists a vertex $s\in V(P)$ such that the two segments $P_1:=P[v,s]$ between $v$ and $s$ and $P_2:=P[s,l]$ between $s$ and $l$ of $P$ are directed paths oriented away from $s$ (note that one of these two paths could be of length $0$). Observe that $|A(P_1)|=h(v)-h(s)=-h(s)\ge h(l)-h(s)=|A(P_2)|$, so $P_2$ is at most as long as $P_1$. In the following let us denote $T':=T-l$ and note that $T'$ is a max-grounded tree smaller than $T$, and hence satisfies the statement of the lemma.

Let $D'\subseteq D$ with root set $R'\subseteq R$ be the digraph obtained by applying Lemma~\ref{Clean-up} to $D$. Then $D'$ is a $k$-typed $(k,d')$-broom digraph for $d'=\lceil d^{1/8k}\rceil \ge 10^{13k^3\cdot (8k)^{|V(T)|}/(8k)}=10^{13k^3\cdot (8k)^{|V(T')|}}$, and we have $d_D^-(r)\ge d^{1/10}$ for all $r\in R'$ as well as $\delta^-(D')\ge 1$.

Since $D'$ and $T'$ with parameters $k$ and $d'$ meet the conditions of the lemma, and since the lemma holds for $T'$, we now find that there exists a proper copy $F'$ of $T'$ in $D'$. Let $\iota:V(T')\rightarrow V(F')$ denote the embedding of $T'$ onto $F'\subseteq D'$. Note that by definition of a proper copy, and since $\max_{x\in V(T')}{h(x)}=0$, we have $\iota(x)\in R'\subseteq R$ for every $x\in V(T')$ with $h(x)=0$ and the paths $(P_{D'}(x))_{x\in Z(F')}$ are pairwise vertex-disjoint and disjoint from $V(F')\setminus Z(F')$.

We divide the remainder of the proof into three cases based on the way that $l$ attaches to $T'$. In each case, we show that $F'$ can be extended to a proper copy $F$ of $T$ in $D$, which will then contradict our assumptions on $T$. Let $u$ denote the unique neighbor of $l$ in $T$.
    \paragraph{\textbf{Case 1.}} $l$ is an in-neighbor of $u$ and $\iota(u)\notin R'$. 
    
    Since $\iota(x)\in R'$ for all $x\in V(T')$ with $h(x)=0$, it follows that $h(u)<0$ and thus $u\notin G(T)$. Consequently, $l$ is the unique in-neighbor of $u$ in $T$. Hence, $u\in Z(T')$ and $Z(T)=(Z(T')\setminus \{u\})\cup \{l\}$. As $\iota(u)\notin R'$, we get that $P_{D'}(\iota(u))$ is of length at least $1$. Let $z$ be the unique in-neighbor of $\iota(u)$ on $P_{D'}(\iota(u))$. Since $F'$ is a proper copy, we have $z\notin V(F')$. Let $F\supseteq F'$ be the copy of $T$ in $D$ obtained by embedding $l\in V(T)$ on $z\in V(D')\subseteq V(D)$. We claim that $F$ is a proper copy of $T$ in $D$. The first condition of a proper embedding is satisfied since $h(l)\le h(u)-1\le -1$ and since $F'$ was assumed to be a proper copy of $T'$. For the second condition, it remains to verify that the paths $(P_D(x))_{x\in Z(F)}$ are pairwise vertex-disjoint and disjoint from $V(F)\setminus Z(F)$. However, this easily follows since $Z(F)=(Z(F')\setminus \{\iota(u)\})\cup \{z\}$, $P_{D'}(z)\subseteq P_{D'}(\iota(u))$ by definition and $P_{D}(x)\subseteq P_{D'}(x)$ for every $x\in V(D)$ because $R'\subseteq R$. 
    \paragraph{\textbf{Case 2.}} $l$ is an in-neighbor of $u$ and $\iota(u)\in R'$.
    
    It follows that $d^-_{D}(\iota(u))\geq d^{1/10}$. Note that by definition of a broom digraph, we have that $\iota(u)$ has at most one in-neighbor in each of the $(k,d)$-brooms whose union forms $D$. As $d^{1/10}\geq k>|V(F)|-1$ by our assumptions on $k$ and $d$, this implies that we can pick an in-neighbor $z\in N^-_{D}(\iota(u))$ such that $P_{D}(z)$ is disjoint from $P_{D}(x)$, for all $x\in V(F')$. We may now extend $F'$ to a copy $F$ of $T$ in $D$ by embedding $l$ on $z$. Again, it is not hard to see that this is a proper embedding: As above we have $h(l)\le h(u)-1\le -1$, so that the first condition of properness is satisfied since $F'$ is a proper copy of $T'$ in $D'$. The second condition follows since $Z(F)=(Z(F')\setminus \{\iota(u)\})\cup \{z\}$, the paths $P_{D}(x)\subseteq P_{D'}(x)$ for $x\in Z(F')\setminus \{\iota(u)\}$ are pairwise disjoint from each other and from $(V(F')\setminus Z(F'))\cup\{\iota(u)\}\supseteq V(F)\setminus Z(F)$ and since by our choice of $z$ we have that $P_D(z)$ is disjoint from all the above paths as well as from $V(F')=V(F)\setminus \{z\}$. 
    \paragraph{\textbf{Case 3.}} $l$ is an out-neighbor of $u$. 
    
    Let $\ell_1,\ell_2$ denote the lengths of the paths $P_1,P_2$, respectively, and note/recall that $1\le \ell_2\leq \ell_1< |V(T)|\le k$. Let $Q_1:=\iota(P_1)$ be the image of $P_1$ in the copy $F'$ of $T$. Since $F'$ is a proper copy of $T'$ in $D'$, the endpoint of $Q_1$, namely $\iota(v)$, must be contained in $R'$ (this is because $v\in B$ and hence $h(v)=0$ by our choice of $v$). Hence, $Q_1$ is a directed path of length $\ell_1\in [k]$ from $\iota(s)$ to a vertex in $R'$. Since $D'$ is $k$-typed, it follows that \emph{every} directed walk in $D'$ of length $\ell_1$ starting in $\iota(s)$ must end in $R'$. Let $Q_2:=\iota(P_2-l)$ denote the image of $P_2-l$ in $F'$ with endpoints $\iota(s)$ and $\iota(u)$. We next claim that there exists a directed path of length at most $k$ in $D'$ from $\iota(u)$ to $R'$. To see this, let $Q$ be a shortest directed path in $D$ from $\iota(u)$ to $R'$ (note that such a path exists by definition of a broom digraph). Towards a contradiction, suppose that $|A(Q)|>k$ and consider the directed walk $W$ in $D'$ starting in $\iota(s)$ which is obtained by concatenating $Q_2$ and $Q$. By assumption, the length of $W$ is at least $|A(Q)|>k$. Then by the above we must have that the $(\ell_1+1)$-st vertex we meet when traversing $W$ starting from $\iota(s)$ must be contained in $R'$. Since further the initial segment $Q_2$ of $W$ has length $|A(Q_2)|=\ell_2-1<\ell_1$, it follows that $x$ must be a vertex of $Q$ distinct from its endpoint. This, however, is a contradiction, since $x\in R'$ and $Q$ was a shortest directed path from $\iota(u)$ to $R'$. This shows that our above assumption was wrong, there indeed exists a directed path in $D'$ of length at most $k$ from $\iota(u)$ to $R'$.
    
    Hence, we may apply Lemma~\ref{lem: high degree} to the vertex $\iota(u)$ in $D'$, which yields that $d_{D'}^+(\iota(u))=d'\gg k$. Observe that by definition of a $(k,d)$-broom digraph, for each $x\in V(F')$, at most one out-neighbor of $\iota(u)$ in $D'$ is contained in $P_{D'}(x)$. Thus, we may choose a vertex $z\in N^+_{D'}(\iota(u))$ which does not appear on any of the paths $P_{D'}(x)$, $x\in V(F')$. Now let $F$ be the copy of $T$ in $D'\subseteq D$ obtained by embedding $l$ at $z$. We claim that this is a proper copy of $F$ in $D$: Indeed, we have $Z(F)=Z(F')$ and hence the paths $P_D(x)\subseteq P_{D'}(x)$ for $x\in Z(F)$ are pairwise disjoint and disjoint from $V(F)\setminus Z(F)$, since $F'$ is a proper copy of $T'$ in $D'$, and by our choice of $z$. It thus remains to verify that every vertex $x\in V(T)$ with $h(x)=0$ is embedded into $R$. Since $F'$ is a proper copy of $T'$ in $D'$ and since $R'\subseteq R$, this is indeed the case for every $x\in V(T')=V(T)\setminus \{l\}$. It then remains to verify that $h(l)=0$ implies that $z\in R'$. Note that by definition of a height function and since $P_1, P_2$ are directed paths starting in $s$ in $T$, we have that $h(l)=h(v)-\ell_1+\ell_2=\ell_2-\ell_1$. Hence, $h(l)=0$ implies that $\ell_1=\ell_2$. Recall that we established above that every directed path in $D'$ starting in $\iota(s)$ of length $\ell_1$ ends in $R'\subseteq R$. In particular, this is the case for the path $Q_2\cup\{(\iota(u),z)\}$ in $D'$, which is of length $|A(Q_2)|+1=(\ell_2-1)+1=\ell_2=\ell_1$. Hence, we indeed have $z\in R$ if $h(l)=0$, as desired. This shows that $F$ is indeed a proper copy of $T$ in $D$, as desired.

As we found the desired contradiction in each of the three cases, this means that our initial assumption on the existence of $T$ was correct. This shows that the statement of the lemma holds, concluding the proof.
\end{proof}

Finally, we can easily complete the proof of our main result, Theorem~\ref{thm:main} by combining Observation~\ref{obs:obvious} and Lemma~\ref{lem:keyfortheorem}.
\begin{proof}[Proof of Theorem~\ref{thm:main}]
By Observation~\ref{obs:obvious}, it suffices to show that every max-grounded tree is $\delta^+$-enforcible. So let $T$ be any given max-grounded tree, let $k:=|V(T)|$ and define $d=d(T):=10^{13k^3(8k)^k}$. Let $D$ be any digraph satisfying $\delta^+(D)\ge d$. By keeping exactly $d$ out-arcs at each vertex, we may w.l.o.g. assume $d_D^+(v)=d$ for each $v\in V(D)$. By the comment after Definition~\ref{def:broomdigraph} we have that $D$ is a $(k,d)$-broom digraph. Additionally, one can observe that $D$ meets the conditions of Lemma~\ref{lem:keyfortheorem}. Hence, by the lemma there exists a (proper) copy of $T$ in $D$. Since $D$ was chosen arbitrarily, this by definition proves that $T$ is $\delta^+$-enforcible, as desired. This concludes the proof of the theorem.
\end{proof}
\section{Concluding remarks}
Let $d_k$ denote the minimum $d$ such that every digraph with minimum out-degree at least $d$ contains every grounded tree on $k$ vertices. Our result shows that $d_k$ exists and $d_k\leq 2^{(Ck)^{k}}$ for some large enough constant $C$. It would be interesting to understand the dependence of $d_k$ on $k$ better. For example, is it polynomial, exponential or double exponential? 
\bibliographystyle{abbrv}

\end{document}